\newcommand{\urltilde}{\~{}}                          
\newcommand{\revauthor}[1]{\relax}
\keywords{reverse mathematics, proof theory, Dialectica, 
          modified realizability, uniformization}
\subjclass{03B30; 03F35; 03F50; 03F60}
\newtheorem{theorem}{Theorem}[section]
\newtheorem{lemma}[theorem]{Lemma}
\theoremstyle{definition} 
\newtheorem{definition}[theorem]{Definition} 
\newtheorem{remark}[theorem]{Remark}
\newcommand{\define}[1]{\textit{#1}}
\newcommand{\RCAo}{\mathsf{RCA}_0}
\newcommand{\rcaw}{{\sf {RCA}}^\omega}
\newcommand{\rca}{{\sf{RCA}}}
\newcommand{\haw}{{\widehat{\mathsf{E}\text{-}\mathsf{HA}}}\strut^\omega_{\raise2pt\hbox{\scriptsize${\mathord{\upharpoonright}}$}}}
\newcommand{\hatwo}{\widehat{\mathsf{E}\text{-}\mathsf {HA}}\strut^2_{\raise2pt\hbox{\scriptsize${\mathord{\upharpoonright}}$}}}
\newcommand{\hawfi}{\mathsf{E}\text{-}\mathsf {HA}^\omega}
\newcommand{\hafitwo}{\mathsf{E}\text{-}\mathsf {HA}^2}
\newcommand{\whaw}{{\widehat{\mathsf {WE}\text{-}\mathsf {HA}}} ^\omega_{\raise2pt\hbox{\scriptsize${\mathord{\upharpoonright}}$}}}
\newcommand{\whawfi}{\mathsf{W}\hawfi}
\newcommand{\ia}{\mathsf{IA}}
\newcommand{\ac}{\mathsf{AC}}
\newcommand{\ipwef}{\mathsf{IP}^{\mspace{2mu}\omega}_{\text{\textup{ef}}}}
\newcommand{\ipwa}{\mathsf{IP}^{\mspace{1mu}\omega}_{\forall}}
\newcommand{\markov}{\mathsf{M}^\omega}
\newcommand{\REC}{\text{\textrm{REC}}}
\newcommand{\mr}{\operatorname{\mathsf{mr}}}
\newcommand{\lang}{{\mathcal L}}
\newcommand{\setN}{\mathbb{N}} 
\newcommand{\setQ}{\mathbb{Q}}
\newcommand{\seq}[1]{#1}
\newcommand{\seqx}{\langle x_n \mid {n \in \setN}\rangle}
\newcommand{\seqy}{\langle y_n \mid {n \in \setN} \rangle}
\begin{document}

\title[Reverse mathematics and uniformity in proofs]{Reverse mathematics and uniformity in proofs without excluded middle}

\author{Jeffry L. Hirst}
\address{Department of Mathematical Sciences\\
Appalachian State University\newline
Boone, NC  28608, USA}
\email{jlh@math.appstate.edu}
\urladdr{www.mathsci.appstate.edu/\urltilde jlh}

\author{Carl Mummert}
\address{Department of Mathematics\\
Marshall University\newline
One John Marshall Drive\\
Huntington,  WV 25755, USA}
\email{mummertc@marshall.edu}
\urladdr{www.science.marshall.edu/mummertc}

\date{\today}

\begin{abstract}
  We show that when certain statements are provable
  in subsystems of constructive analysis using intuitionistic
  predicate calculus, related sequential statements are provable in
  weak classical subsystems.  In particular, if a $\Pi^1_2$ sentence
  of a certain form is provable using
  E-HA${}^\omega$ along with the axiom of
  choice and an independence of premise principle, the sequential form
  of the statement is provable in the classical system RCA.
  We obtain this and similar results using applications of modified
  realizability and the \textit{Dialectica} interpretation. These
  results allow us to use techniques of classical reverse mathematics
  to demonstrate the unprovability of several mathematical principles in
  subsystems of constructive analysis.
\end{abstract}

\maketitle

\section{Introduction}

We study the relationship between systems of intuitionistic arithmetic
in all finite types (without the law of the excluded middle) and weak
subsystems of classical second order arithmetic.  Our theorems give
precise expressions of the informal idea that if a sentence $\forall
X\, \exists Y\, \Phi(X,Y)$ is provable without the law of the excluded
middle, then the proof should be sufficiently direct that the stronger
\textit{sequential form}
\[
\forall \langle X_n \mid n \in \setN \rangle\,
\exists \langle Y_n \mid n \in \setN\rangle\, \forall n\,
 \Phi(X_n,Y_n)
 \] 
is provable in a weak subsystem of classical arithmetic. We call
our theorems ``uniformization results'' because the provability of
the sequential form demonstrates a kind of uniformity in the proof of
the original sentence.

The subsystems of classical arithmetic of interest are 
$\rca_0$, which is well-known in Reverse 
Mathematics~\cite{Simpson-SOSOA}, and its extension $\rca$ with 
additional induction axioms. These systems are closely related to 
computable analysis. In particular, both subsystems are satisfied in the 
model $\REC$ that has the set $\omega$ of standard natural numbers as 
its first order part and the collection of all computable subsets of 
$\omega$ as its second order part. When the conclusions of our 
uniformization results are viewed as statements about $\REC$, they 
provide a link between constructive analysis and computable analysis.  
Moreover, because $\rca_0$ is the base system most often employed in 
Reverse Mathematics, our results also provide a link between the 
fields of Reverse Mathematics and constructive analysis. Full 
definitions of the subsystems of intuitionistic and classical arithmetic 
that we study are presented in section~\ref{sec2}.

In section~\ref{sec3}, we prove uniformization results using modified
realizability, a well-known tool in proof theory. In particular, we
show there is a system $I_0$ of intuitionistic arithmetic in all finite types 
such that whenever an $\forall\exists$ statement of a certain
syntactic form is provable in $I_0$, its sequential form is
provable in $\rca_0$ (Theorem~\ref{719J}). Moreover, the system $I_0$ contains the full scheme for the
axiom of choice in all finite types, which is classically much
stronger than $\rca_0$. We have attempted to make section~\ref{sec3}
accessible to a general reader who is familiar with mathematical logic
but possibly unfamiliar with modified realizability.

In section~\ref{sec4}, we give several examples of theorems in
classical mathematics that are provable in $\rca_0$ but not provable
in~$I_0$. These examples demonstrate empirically that the syntactic
restrictions within our uniformization theorems are not excessively
tight. Moreover, our uniformization theorems allow us to obtain these 
unprovability results simply by showing that the sequential versions of the
statements are unprovable in $\rca_0$, which can be done using
classical techniques common in Reverse Mathematics.  In this way, we
obtain results on unprovability in intuitionistic arithmetic solely through a
combination of our uniformization theorems and the study of classical
arithmetic.  A reader who is willing to accept the results of
section~\ref{sec3} should be able to skim that section and then
proceed directly to section~\ref{sec4}.

In section~\ref{sec5}, we prove uniformization results for $\rca_0$
and $\rca$ using the {\it Dialectica} interpretation of G\"odel.
These results allow us to add a Markov principle to the system of
intuitionistic arithmetic in exchange for shrinking the class of
formulas to which the theorems apply.

We would like to thank Jeremy Avigad and Paulo Oliva for
helpful comments on these results.  We began this work
during a summer school on proof theory taught by Jeremy
Avigad and Henry Towsner at Notre Dame in 2005.  Ulrich
Kohlenbach generously provided some pivotal insight during
the workshop on Computability, Reverse Mathematics, and
Combinatorics at the Banff International Research Station in~2008, 
and much additional assistance in later conversations.

\section{Axiom systems}\label{sec2}

Our results make use of subsystems of intuitionistic and classical
arithmetic in all finite types. The definitions of these systems rely on 
the standard type notation in which the type of a natural number is
$0$ and the type of a function from objects of type $\rho$ to objects
of type $\tau$ is $\rho \to \tau$.  For example, the type of a
function from numbers to numbers is $0 \to 0$.  As is typical in the
literature, we will use the types $1$ and $0\to 0$ interchangeably,
essentially identifying sets with their characteristic functions.  We
will often write superscripts on quantified variables to indicate
their type.
%

Full definitions 
of the following systems are given by Kohlenbach~\cite{Koh-book}*{section~3.4}.

\begin{definition}
The system $\haw$ is a theory of intuitionistic arithmetic in all finite types first defined by Feferman~\cite{Feferman-1977}.

The language $\lang(\haw)$ includes the constant 0; the successor,
addition, and multiplication operations; terms for primitive recursion
on variables of type $0$; and the projection and substitution
combinators (often denoted $\Pi_{\rho,\tau}$ and
$\Sigma_{\delta,\rho,\tau}$ \cite{Koh-book}) which allow terms to be
defined using $\lambda$ abstraction.  For example, given $x \in \setN$
and an argument list $t$, $\haw$ includes a term for $\lambda t.x$,
the constant function with value $x$.

The language includes equality as a primitive relation only for type
$0$ objects (natural numbers). Equality for higher types is defined
pointwise in terms of equality of lower types, using the following
extensionality scheme
\[ {\mathsf{E}}\colon \forall x^\rho \forall y^\rho \forall z^{\rho\to
    \tau}\, ( x =_\rho y \to z(x) = _\tau z(y) ).
\]

The axioms of $\haw$ consist of this extensionality scheme, the
basic arithmetical axioms, the defining axioms for the term-forming
operators, and an axiom scheme for induction on quantifier-free
formulas (which may have parameters of arbitrary types).
\end{definition}

\begin{definition}[Troelstra~\cite{troelstra73}*{1.6.12}]
  The subsystem $\hawfi$ is an extension of $\haw$ with additional terms
 and stronger
  induction axioms.  Its language contains additional term-forming
  recursors $R_\sigma$ for all types $\sigma$.  Its new axioms include
  the definitions of these recursors and the full induction scheme
  \[\ia\colon A(0) \to (\forall n(A(n) \to A(n+1)) \to \forall n
  A(n)),\]
in which $A$ may have parameters of arbitrary types.
\end{definition}

The following class of formulas will have an important role in our results. These are, informally, the formulas that have no existential commitments in intuitionistic systems.
\begin{definition}
  A formula of $\lang(\haw)$ 
is \textit{$\exists$-free} if it is built from prime (that is, atomic)
formulas using only universal quantification and the connectives
$\land$ and $\to$. Here the symbol $\bot$ is treated as a prime
formula, and a negated formula $\lnot A$ is treated as an abbreviation for
$A \to \bot$; thus $\exists$-free formulas may include both $\bot$ and~$\lnot$.
\end{definition}

We will consider extensions of $\haw$ and $\hawfi$ that include additonal axiom schemes. The following schemes have been discussed by Kohlenbach~\cite{Koh-book} 
and by Troelstra~\cite{troelstra73}.

\begin{definition} The following axiom schemes are defined in
  $\lang(\hawfi)$.  When we adjoin a scheme to $\haw$, we implicitly
  restrict it to $\lang(\haw)$. The formulas in these schemes may have
parameters of arbitrary types.
\begin{list}{$\bullet$}{}

\item  \textit{Axiom of Choice}.  For any $x$ and $y$ of finite type,
\[ \ac \colon \forall x\, \exists y A(x,y) \to \exists Y\, \forall x\, A(x,Y(x)).\]

\item \textit{Independence of premise for $\exists$-free formulas}.
  For $x$ of any finite type, if $A$ is $\exists$-free and
  does not contain $x$, then
\[\ipwef\colon (A \to \exists x B(x)) \to \exists x (A \to B(x)).\]

\item \textit{Independence of premise for universal formulas}. If
  $A_0$ is quantifier free, $\forall x$ represents a block
  of universal quantifiers, and $y$ is of any type and is
  not free in $\forall x A_0(x)$, then
\[\ipwa\colon (\forall x A_0(x) \to \exists y B(y)) \to \exists y (\forall x A_0 (x) \to B(y)).\]

\item \textit{Markov principle for quantifier-free formulas}. If $A_0$ is quantifier-free and
  $\exists x$ represents a block of existential quantifiers
  in any finite type, then
\[\markov\colon \neg\neg \exists x A_0 (x) \to \exists x A_0 (x).\]
\end{list}
 
\end{definition}

\subsection{Classical subsystems}

The full scheme $\ac$ for the axiom of choice in all finite types, 
which is commonly included in subsystems of intuitionistic arithmetic, 
becomes extremely strong in the presence of the law of the excluded
middle. For this reason, we will be interested in the restricted choice
scheme
\[
{\mathsf{QF}{\text-}\mathsf{AC}}^{\rho,\tau}\colon \forall x^\rho\, \exists y^\tau A_0(x,y) \to \exists Y^{\rho \to \tau}\, \forall x^\rho A_0 (x, Y(x) ),
\]
where $A_0$ is a quantifier-free formula that may have parameters.

We obtain subsystems of classical arithmetic by adjoining forms of this 
scheme, along with the law of the excluded middle, to systems
of intuitionistic arithmetic.  Because these systems include the law
of the excluded middle, they also include all of classical predicate
calculus.
\begin{definition} The system $\rcaw_0$ consists of $\haw$ plus ${\sf
      QF{\text-}AC}^{1,0}$ and the law of the excluded middle.

 The system $\rcaw$ consists of $\hawfi$ (which includes full
    induction) plus ${\sf QF{\text-}AC}^{1,0}$ and the law of the
    excluded middle.  
\end{definition}

We are also interested in the following second order restrictions of
these subsystems. Let $\hatwo$ represent the restriction of $\haw$ to
formulas in which all variables are type $0$ or $1$, and let
$\hafitwo$ be the similar restriction of $\hawfi$ in which variables
are limited to types $0$ and $1$ and the recursor constants are
limited to those of type~$0$.

\begin{definition}
 The system $\rca_0$ consists of $\hatwo$ plus ${\sf
      QF{\text-}AC}^{0,0}$ and the law of the excluded middle.

 The system $\rca$ consists of $\hafitwo$ (which includes the full
    induction scheme for formulas in its language) plus ${\sf
      QF{\text-}AC}^{0,0}$ and the law of the excluded middle.  
\end{definition}

The system $\rca_0$ (and hence also $\rcaw_0$) is able to prove the
induction scheme for $\Sigma^0_1$ formulas using ${\sf QF}\text{-}{\sf
  AC}^{0,0}$ and primitive recursion on variables of type~$0$,
 as noted by Kohlenbach~\cite{Koh-HORM}.

The following conservation results show that the second order subsystems 
$\rca$ and $\rca_0$ have the same deductive strength for
sentences in their restricted languages as the corresponding
higher-type systems $\rca^\omega$ and $\rca^\omega_0$,
respectively.

\begin{theorem}\label{consrcao}{\cite{Koh-HORM}*{Proposition~3.1}}
  For every sentence $\Phi$ in $\lang(\rca_0)$, 
  if $\rcaw_0 \vdash \Phi$ then $\rca_0 \vdash \Phi$.
\end{theorem}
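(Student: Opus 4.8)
The plan is to argue model-theoretically, reducing the claim to a single model-expansion statement: every model of $\rca_0$ extends to a model of $\rcaw_0$ having the same type-$0$ and type-$1$ universes, with the same arithmetic operations and the same interpretation of type-$1$ objects. Granting this, if $\rcaw_0 \vdash \Phi$ for a sentence $\Phi \in \lang(\rca_0)$, then fixing an arbitrary $\mathcal{M} \models \rca_0$ and passing to its expansion $\mathcal{M}^\omega \models \rcaw_0$ gives $\mathcal{M}^\omega \models \Phi$; since $\Phi$ has only type-$0$ and type-$1$ quantifiers and $\mathcal{M}^\omega$ shares its lower two types with $\mathcal{M}$, the two evaluations of $\Phi$ agree, so $\mathcal{M} \models \Phi$. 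As $\mathcal{M}$ is arbitrary and both systems contain classical logic, the completeness theorem yields $\rca_0 \vdash \Phi$. First, then, I would isolate this reduction and verify the routine agreement of the two structures on type-$\le 1$ sentences.

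The substance lies in constructing the higher-type part of $\mathcal{M}^\omega$. I would take types $0$ and $1$ to be those of $\mathcal{M}$, writing $M_1$ for the type-$1$ part, and at higher types take the hereditarily $M_1$-recursive extensional functionals: informally, the functionals computable in finitely many type-$1$ objects of $\mathcal{M}$, with the identifications forced by the extensionality scheme $\mathsf{E}$. Such a structure is automatically closed under the combinators and the type-$0$ recursors, satisfies extensionality by construction, and, being a genuine two-valued structure built in a classical metatheory, satisfies the law of the excluded middle. It remains to check two axiom groups.

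For quantifier-free induction, a quantifier-free formula of $\lang(\haw)$ is a Boolean combination of equations between type-$0$ terms, and substituting hereditarily $M_1$-recursive parameters $\vec{\Phi}$ makes the predicate $\{n : A(n,\vec{\Phi})\}$ recursive in finitely many type-$1$ objects, hence $\Delta^0_1$ with type-$1$ parameters; by $\Delta^0_1$ comprehension this set lies in $M_1$, and induction over it holds because $\rca_0$ proves $\Sigma^0_1$ induction. For ${\sf QF}\text{-}{\sf AC}^{1,0}$, given $\forall x^1\,\exists y^0 A_0(x,y)$ the least-witness functional $Y(x) = \mu y.\,A_0(x,y)$ selects a witness, and because $A_0$ is quantifier-free (hence decidable relative to its hereditarily $M_1$-recursive parameters) this $Y$ is again recursive in those parameters and so lies in the structure. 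The crux of the whole argument is that these two requirements are compatible: the single bound ``recursive in type-$1$ parameters'' keeps the higher types tame enough that quantifier-free formulas define only $\Delta^0_1$ conditions, so induction survives, yet rich enough to contain the $\mu$-operator applied to decidable predicates, so choice survives.

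The main obstacle I anticipate is making the type-$2$-and-higher levels precise while preserving both closure and extensionality: ``recursive in a type-$1$ parameter'' is transparent when the inputs are of type $0$ or $1$, but at type $2$ and above it must be formulated through an indexing of functionals and then collapsed to an extensional structure, and one must confirm that application, the combinators, and the recursors descend to the collapse. An alternative that sidesteps the explicit model is proof-theoretic: compose the negative translation with the \textit{Dialectica} interpretation --- the machinery developed in Section~\ref{sec5} --- under which ${\sf QF}\text{-}{\sf AC}^{1,0}$ is interpreted trivially and a type-$\le 1$ conclusion can be traced back into $\rca_0$. I would keep this in reserve as a cross-check but present the model-theoretic route as primary, since it requires none of the later interpretation theory.
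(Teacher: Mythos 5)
Your completeness-based reduction is sound, and the model you sketch is essentially the right one --- but the paper takes the syntactic rather than the model-theoretic route. The theorem is quoted from Kohlenbach (Proposition~3.1), and the proof the paper sketches (explicitly for the companion Theorem~\ref{consrca}) formalizes the extensional model of hereditarily continuous functionals \emph{inside} $\rca_0$: one defines a translation $\Phi \mapsto \Phi_{\sf ECF}$ into the second-order language, shows that $\rcaw_0 \vdash \Phi$ implies $\rca_0 \vdash \Phi_{\sf ECF}$, and then shows $\rca_0 \vdash \Phi \leftrightarrow \Phi_{\sf ECF}$ for second-order $\Phi$. Your expansion is that same model relativized to $M_1$, in disguise: at type $2$, ``computable from a type-1 oracle plus parameters in $M_1$'' coincides with ``continuous with an associate in $M_1$'' (oracle computations are automatically continuous, and $M_1$ is a Turing ideal, so associates stay inside), while the obstacle you correctly anticipate at types $\geq 3$ is exactly where oracle computability stops making literal sense and must be replaced by operations on indices/associates modulo extensional collapse --- that is, by the $\mathsf{ECF}$ construction of Troelstra's section~2.6.5 itself. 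As to what each route buys: yours avoids interpretation machinery and is transparent through type $2$, but it invokes G\"odel completeness and, over a nonstandard $\mathcal{M}$, ``recursive'' must be internalized via codes in the first-order part, so carrying out the verifications (quantifier-free induction, ${\sf QF\text{-}AC}^{1,0}$ via the $\mu$-functional, descent of application and the combinators to the collapse) essentially reconstructs the formalized interpretation locally; the paper's route is uniform and finitistic, a feature the authors rely on explicitly at the end of section~\ref{sec4}, where the finitistic character of the metamathematics is what makes the unprovability results acceptable to a constructivist. One caution about your reserve plan: negative translation plus \textit{Dialectica} does not apply directly to $\rcaw_0$, because the \textit{Dialectica} interpretation is sound only for \emph{weakly} extensional systems --- the full extensionality scheme $\mathsf{E}$ is not interpretable, which is precisely why section~\ref{sec5} retreats to $\whaw$ with the rule ${\sf QF\text{-}ER}$ --- so that cross-check would first require an elimination-of-extensionality step rather than being a routine composition.
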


The proof of this theorem is
based on a formalization of the extensional model of the hereditarily
continuous functionals ($\mathsf{ECF}$), as presented in section~2.6.5
of Troelstra~\cite{troelstra73}.  The central notion is that
continuous objects of higher type can be encoded by lower type objects.
For example, if $\alpha$ is a functional of type $1 \to 0$ and
$\alpha$ is continuous in the sense that the value of $\alpha (X)$
depends only on a finite initial segment of the characteristic
function of $X$, then there is an {\sl associated function}
\cite{Kleene} of type $0 \to 0$ that encodes all the information
needed to calculate values of~$\alpha$.  Generalizing this notion,
with each higher-type formula $\Phi$ we can associate a second order
formula $\Phi_{\sf ECF}$ that encodes the same information.  The proof
sketch for the following result indicates how this is applied to
obtain conservation results.

\begin{theorem}\label{consrca}
  For each sentence $\Phi$\/ in $\lang(\rca)$, if\/ $\rcaw \vdash \Phi$
  then $\rca \vdash \Phi$.
\end{theorem}
\begin{proof}
  The proof proceeds in two steps.  First, emulating section~2.6.5 and
  Theorem~2.6.10 of Troelstra~\cite{troelstra73}, show that if $\rcaw
  \vdash \Phi$ then $\rca \vdash \Phi_{\sf ECF}$.  Second, following
  Theorem~2.6.12 of Troelstra \cite{troelstra73}, prove that if $\Phi$
  is in the language of $\rca$ then $\rca \vdash \Phi \leftrightarrow
  \Phi_{\sf ECF}$.
\end{proof}
  
The classical axiomatization of $\RCAo$, presented by
Simpson~\cite{Simpson-SOSOA}, uses the set-based language $L_2$ with
the membership relation symbol~$\in$, rather than the language based
on function application used in~$\haw$.  The systems defined above as
$\RCAo$ is sometimes denoted $\RCAo^2$ to indicate it is a restriction
of $\RCAo^\omega$.  As discussed by Kohlenbach~\cite{Koh-HORM},
set-based $\RCAo$ and function-based $\RCAo^2$ are each included in a
canonical definitional extension of the other, and the same holds for
set-based $\rca$ and function-based $\rca^2$.  Throughout this paper,
we use the functional variants of $\RCAo$ and $\rca$ for convenience,
knowing that our results apply equally to the traditionally
axiomatized systems.

\section{Modified realizability}\label{sec3}

Our most broadly applicable uniformization theorems are 
proved by an application of modified realizability, a
technique introduced by Kreisel~\cite{KR}.  Excellent expositions on
modified realizability are given by
Kohlenbach~\cite{Koh-book} and
Troelstra~\cites{troelstra73,troelstra-HP}.  Indeed, our proofs 
make use of only minute modifications of results stated in these sources.

Modified realizability is a scheme for matching each formula $A$ with
a formula \mbox{$t \mr A$} with the intended meaning ``the sequence
of terms $t$ realizes~$A$.'' 

\begin{definition}\label{719A}
Let $A$ be a formula in $\lang ( \hawfi )$, and let $\seq x$ denote a possibly
empty tuple of terms whose variables do not appear free in $A$.  The formula
$\seq x \mr A$ is defined inductively as follows:
\begin{list}{}{}
\item [(1)] $\seq x \mr A$ is $A$, if $\seq x$ is empty and $A$ is a prime formula.
\item [(2)] $\seq x , \seq y \mr (A \land B)$ is $\seq x \mr A \land \seq y \mr B$.
\item [(3)] $z^0, \seq x, \seq y \mr (A \lor B)$ is $(z = 0 \to \seq x \mr A) \land (z \neq 0 \to \seq y \mr B)$.
\item [(4)] $\seq x \mr (A \to B)$ is $\forall y ( \seq y \mr A \to \seq x \seq y \mr B)$.
\item [(5)] $\seq x \mr (\forall y^\rho A(y))$ is $\forall y^\rho (\seq x \seq y \mr A(y))$.
\item [(6)] $z^\rho , \seq x \mr (\exists y^\rho A(y))$ is $\seq x \mr A(z)$.
\end{list}
Note that if $A$ is a prime formula then $A$ and $t \mr A$ are identical; this is even true for $\exists$-free formulas if we ignore dummy quantifiers.
\end{definition}

We prove each of our uniformization results in two steps. The
first step shows that whenever an $\forall \exists$ statement is
provable in a particular subsystem of intuitionistic arithmetic, we
can find a sequence of terms that realize the statement.  The second
step shows that a classical subsystem is able to leverage the terms in
the realizer to prove the sequential version of the original
statement.

We begin with systems containing the full induction scheme.
For the first step, we require the following theorem.

\begin{theorem}[\cite{Koh-book}*{Theorem~5.8}]\label{719B}
Let $A$ be a formula in $\lang ( \hawfi )$.  If\/
\[
\hawfi +\ac + \ipwef \vdash A
\]
then there is a tuple $t$ of terms of $\lang(\hawfi)$ 
such that $\hawfi  \vdash t \mr A$.
\end{theorem}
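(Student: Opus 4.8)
The plan is to argue by induction on the length of a derivation of $A$ in $\hawfi + \ac + \ipwef$, working in an intuitionistic predicate calculus (Hilbert-style or natural deduction). At each inference I would produce, from realizers for the premises, a tuple of $\lang(\hawfi)$-terms realizing the conclusion, and check that the required realizability statement is provable in $\hawfi$. Since the clauses of Definition~\ref{719A} define $\seq x \mr A$ from the translations of the immediate subformulas of $A$, the realizing terms can be assembled compositionally using the combinators $\Pi$ and $\Sigma$ for $\lambda$-abstraction together with the recursors $R_\sigma$.

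The logical rules and axioms of the predicate calculus I would treat uniformly. The inference carrying the essential computational content is modus ponens: if $s \mr A$ and $r \mr (A \to B)$, then clause~(4) gives $\forall \vec y\,(\vec y \mr A \to r\vec y \mr B)$, so $rs \mr B$, and term application is available in $\hawfi$. The quantifier rules follow directly from clauses~(5) and~(6). Of the nonlogical axioms of $\hawfi$, the arithmetical axioms, the defining equations for the combinators and recursors, and the extensionality scheme $\mathsf{E}$ are all $\exists$-free; by the remark after Definition~\ref{719A} each is its own realizer and is provable in $\hawfi$ outright. The one axiom demanding work is full induction $\ia$: from a realizer $\vec a$ of $A(0)$ and a realizer $\vec f$ of $\forall n\,(A(n) \to A(n+1))$ I would define $\vec g = R_\sigma(\vec a, \vec f)$ by primitive recursion, so that $\vec g(0) = \vec a$ and $\vec g(n+1) = \vec f(n,\vec g(n))$, and then prove $\forall n\,(\vec g(n) \mr A(n))$ by induction inside $\hawfi$. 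This is exactly where the full recursors and the full induction scheme of $\hawfi$ are required.

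The characteristic cases are the two additional schemes. For $\ac$, unwinding Definition~\ref{719A} shows that a realizer of $\forall x\,\exists y\, A(x,y)$ is already a pair $(Y,\vec W)$ satisfying $\forall x\,(\vec W(x) \mr A(x,Y(x)))$, so it contains a choice function $Y$ outright; the same pair realizes $\exists Y\,\forall x\, A(x,Y(x))$, and the instance of $\ac$ is realized by the identity on the realizing tuple. For $\ipwef$ the decisive point is that $A$ is $\exists$-free: then the realizer tuple for $A$ is empty and $\vec y \mr A$ collapses to $A$, so a realizer of $A \to \exists x\, B(x)$ is a pair $(c,\vec d)$ with $A \to \vec d \mr B(c)$, and this same $(c,\vec d)$ realizes $\exists x\,(A \to B(x))$. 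The witness $c$ is thus produced without consulting any realizer of the premise, which is precisely the content of independence of premise.

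I expect the conceptual obstacle to be the $\ipwef$ case, which is sound only because $\exists$-free formulas carry no realizing content; making it precise relies on the collapse of $t \mr A$ to $A$ recorded after Definition~\ref{719A}. The main technical care lies instead in the induction case, where one must exhibit the recursor term and then carry out the verifying induction on the possibly quantifier-containing formula $\vec g(n) \mr A(n)$ within $\hawfi$, which is legitimate precisely because $\hawfi$ includes the full scheme $\ia$.
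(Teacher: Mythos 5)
Your proposal is correct and follows essentially the same route as the paper, which does not prove Theorem~\ref{719B} itself but cites Kohlenbach's Theorem~5.8: the proof there is exactly your induction on derivations, with modus ponens handled by term application, the induction scheme $\ia$ realized by the recursors $R_\sigma$ with the verifying induction on the (possibly quantified) formula $\vec g(n) \mr A(n)$ carried out inside $\hawfi$, $\ac$ self-realizing because a realizer of $\forall x\,\exists y\,A(x,y)$ already contains the choice function $t_0$, and $\ipwef$ sound precisely because $\exists$-free formulas have empty realizer tuples so that $t \mr A$ collapses to $A$. I see no gaps.
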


%

For any formula $A$, $\hawfi + \ac + \ipwef$ is able to prove $A
\leftrightarrow \exists x ( x \mr A)$.  However, the deduction of $A$
from $(t \mr A)$ directly in $\hawfi$ is only possible for some
formulas. 

\begin{definition}\label{719C}
$\Gamma_1$ is the collection of formulas in $\lang ( \hawfi )$ defined inductively as follows.
\begin{list}{}{}
\item [(1)]  All prime formulas are elements of $\Gamma_1$.
\item [(2)] If $A$ and $B$ are in $\Gamma_1$, then so are $A \land B$,
  $A\lor B$, $\forall x A$, and $\exists x A$.
\item [(3)] If $A$ is 
$\exists$-free  and $B$ is in $\Gamma_1$, then $(\exists x
  A \to B)$ is in $\Gamma_1$, where $\exists x$ may represent a block of
  existential quantifiers.
\end{list}
\end{definition}
The class $\Gamma_1$ is sometimes defined in terms of ``negative''
formulas~\cite{troelstra73}*{Definition~3.6.3}, those which can be constructed
from negated prime formulas by means of $\forall$, $\land$, $\to$, and
$\bot$. In all the systems studied in this paper, every $\exists$-free
formula is equivalent to the negative formula obtained by replacing
each prime formula with its double negation. Thus the distinction
between negative and $\exists$-free will not be significant.


The next lemma is proved by Kohlenbach~\cite{Koh-book}*{Lemma~5.20} 
and by Troelstra~\cite{troelstra73}*{Lemma~3.6.5}

\begin{lemma}
\label{719D}
For every formula $A$ in $\lang(\hawfi)$, if $A$ is in $\Gamma_1$, 
then $\hawfi \vdash (t \mr A ) \to A$.
\end{lemma}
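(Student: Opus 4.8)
The plan is to prove Lemma~\ref{719D} by induction on the structure of the formula $A$ as an element of $\Gamma_1$, following the inductive clauses of Definition~\ref{719C}. For each way of building a $\Gamma_1$ formula, I would unwind the definition of $t \mr A$ from Definition~\ref{719A} and show, arguing inside $\hawfi$, that the realizability statement implies $A$ itself. The base case is clause~(1): if $A$ is prime, then by the final remark after Definition~\ref{719A} the formula $t \mr A$ is literally $A$ (with $t$ empty), so $(t \mr A) \to A$ is trivial. Throughout, I would carry along the inductive hypothesis that the claim already holds for the immediate subformulas.

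For the inductive step governed by clause~(2) of Definition~\ref{719C}, I would treat each connective in turn. For $A \land B$, clause~(2) of the realizability definition gives $(\seq x, \seq y) \mr (A \land B) \equiv (\seq x \mr A) \land (\seq y \mr B)$, so the two inductive hypotheses combine immediately. For $A \lor B$, clause~(3) yields a realizer of the form $(z = 0 \to \seq x \mr A) \land (z \neq 0 \to \seq y \mr B)$; here I would use decidability of $z = 0$ (which holds since equality at type $0$ is primitive and quantifier-free formulas are decidable in these systems) to perform a case split and derive $A \lor B$ from whichever disjunct the induction hypothesis supplies. The quantifier cases are direct: for $\forall y\, A$, clause~(5) gives $\forall y\,(\seq x \seq y \mr A(y))$, and applying the inductive hypothesis under the universal quantifier yields $\forall y\, A(y)$; for $\exists y\, A$, clause~(6) provides a witnessing term $z$ with $\seq x \mr A(z)$, and the inductive hypothesis gives $A(z)$, hence $\exists y\, A(y)$.

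The genuinely delicate case, and the one I expect to be the main obstacle, is clause~(3) of Definition~\ref{719C}: the implication $(\exists x\, A \to B)$ where $A$ is $\exists$-free and $B \in \Gamma_1$. Here the point is that realizing $\exists x\, A$ for an $\exists$-free $A$ requires no computational content beyond the witness, since by the remark after Definition~\ref{719A} an $\exists$-free formula $A(x)$ is its own realizer up to dummy quantifiers, so $w \mr (\exists x\, A(x))$ reduces to $A(w)$ for the witness $w$. Unwinding clause~(4), a realizer of $(\exists x\, A \to B)$ transforms any realizer of $\exists x\, A$ into a realizer of $B$; to extract $(\exists x\, A \to B)$ itself, I would assume $\exists x\, A$, obtain a witness $w$ with $A(w)$, observe that this $w$ realizes the premise, feed it to the realizer to obtain a realizer of $B$, and then invoke the inductive hypothesis on $B \in \Gamma_1$ to conclude $B$. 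The subtlety is that this argument depends essentially on the premise being $\exists$-free so that its witness \emph{is} its realizer; for a general premise one would need a realizer rather than a mere proof of truth, which $\hawfi$ cannot produce without the extra axioms, and this is exactly why clause~(3) restricts $A$ to be $\exists$-free.

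Finally, I would remark that every step is carried out within $\hawfi$ using only its logical apparatus together with decidability of prime formulas and the defining axioms for the term-forming operators; no appeal to $\ac$, $\ipwef$, or the law of the excluded middle is needed, which is what makes the conclusion available in the intuitionistic base system.
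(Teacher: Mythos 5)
Your proposal is correct and takes essentially the same route as the paper's proof, which is by citation to Kohlenbach (Lemma~5.20) and Troelstra (Lemma~3.6.5): those sources argue exactly as you do, by induction on the inductive generation of $\Gamma_1$, using decidability of type-$0$ equality for the disjunction case and the fact that an $\exists$-free premise is its own realizer (so a witness of $\exists x\, A$ yields a realizer of it) for the implication case. You have also correctly identified why clause~(3) of Definition~\ref{719C} must restrict the premise to $\exists$-free formulas, which is the heart of the lemma.
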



Applying Theorem~\ref{719B} and Lemma~\ref{719D}, we now prove the
following term extraction lemma, which is similar to the main
theorem on term extraction via modified realizability (Theorem 5.13) of
Kohlenbach~\cite{Koh-book}.  Note that $\forall x\, \exists y\, A$ is in
$\Gamma_1$ if and only if $A$ is in~$\Gamma_1$.

\begin{lemma}\label{719E}
  Let $\forall x^\rho\, \exists y^\tau A(x,y)$ be a sentence of $\lang(\hawfi)$
  in $\Gamma_1$, where $\rho$ and $\tau$ are arbitrary types.
  If
  \[
\hawfi + \ac + \ipwef \vdash \forall x^\rho\, \exists y^\tau A(x,y),
  \]
  then $\rcaw \vdash \forall x^\rho A(x, t(x))$, where $t$
  is a suitable term of\/~$\lang(\hawfi)$.
\end{lemma}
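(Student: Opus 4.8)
The plan is to apply the two available ingredients in sequence: first extract a realizing tuple via Theorem~\ref{719B}, then collapse it back to the formula via Lemma~\ref{719D}. First I would invoke Theorem~\ref{719B} on the hypothesis: since $\hawfi + \ac + \ipwef \vdash \forall x^\rho\, \exists y^\tau A(x,y)$, there is a tuple $s$ of terms of $\lang(\hawfi)$ with
\[
\hawfi \vdash s \mr \forall x^\rho\, \exists y^\tau A(x,y).
\]
Next I would unwind the realizability relation using Definition~\ref{719A}. By clause~(5), the right-hand side is $\forall x^\rho\,\bigl(s\,x \mr \exists y^\tau A(x,y)\bigr)$, where $s\,x$ denotes the components of $s$ each applied to $x$. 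By clause~(6), a realizer of $\exists y^\tau A(x,y)$ consists of a witness of type $\tau$ together with a realizer of the corresponding instance of $A$; so I would split $s\,x$ into its first component $s_0(x)$ of type $\tau$ and the remaining components $s_1(x)$, obtaining
\[
\hawfi \vdash \forall x^\rho\,\bigl(s_1(x) \mr A(x, s_0(x))\bigr).
\]
The term $t := s_0$ is the candidate witness promised in the statement.

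For the second step I would observe that $A(x, s_0(x))$ lies in $\Gamma_1$. By the remark preceding the lemma, $A$ itself is in $\Gamma_1$, and $\Gamma_1$ is closed under substitution of terms for free variables, since substitution alters neither the logical structure of a formula nor the $\exists$-free status of any subformula; hence the instance $A(x, s_0(x))$ is again in $\Gamma_1$. Applying Lemma~\ref{719D} to this instance gives
\[
\hawfi \vdash \bigl(s_1(x) \mr A(x, s_0(x))\bigr) \to A(x, s_0(x)),
\]
and combining this with the displayed realizability statement yields $\hawfi \vdash \forall x^\rho\, A(x, s_0(x))$. Since $\rcaw$ is an extension of $\hawfi$, the same sentence is provable in $\rcaw$, which is the desired conclusion with $t = s_0$.

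I expect the only delicate point to be the bookkeeping in the first step: correctly matching the shape of the tuple $s$ against the nested clauses~(5) and~(6), and isolating exactly the component that serves as the existential witness. Everything else is routine. It is worth noting that this argument in fact establishes the conclusion already in $\hawfi$, so that neither the restricted choice scheme ${\sf QF{\text-}AC}^{1,0}$ nor the law of the excluded middle available in $\rcaw$ is used here; these classical ingredients will instead become essential in the second step of the overall program, when we pass from $\forall x\, A(x,t(x))$ to the sequential form of the statement.
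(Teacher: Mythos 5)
Your proof is correct and follows essentially the same route as the paper's: extract the realizing tuple via Theorem~\ref{719B}, unwind clauses~(5) and~(6) of Definition~\ref{719A} to split off the witness component $s_0$, and apply Lemma~\ref{719D} to recover $\forall x^\rho\, A(x, s_0(x))$ in $\hawfi$, hence in its extension $\rcaw$. Your two added glosses---that $\Gamma_1$ is closed under substitution of terms for free variables, and that the conclusion is already provable in $\hawfi$ so the classical ingredients of $\rcaw$ are not yet used---are accurate observations about steps the paper leaves implicit.
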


\begin{proof}
  Assume that  $\hawfi + \ac + \ipwef \vdash \forall x^\rho
  \exists y^\tau A(x,y)$ where $A(x,y)$ is in $\Gamma_1$.
  By Theorem~\ref{719B}, there is a tuple $t$ of terms of 
  $\lang(\hawfi )$ such that $\hawfi$ proves $t \mr \forall x^\rho
  \exists y^\tau A (x,y)$.  By clause (5) of
  Definition~\ref{719A}, $\hawfi  \vdash \forall x^\rho (
  t(x) \mr \exists y^\tau A (x,y))$.  By clause (6) of
  Definition~\ref{719A}, $t$ has the form $t_0 , t_1$
  and $\hawfi \vdash \forall x^\rho [t_1 (x) \mr A (
  x, t_0 (x))]$.  Because $A(x,y)$ is in~$\Gamma_1$, 
  Lemma~\ref{719D} shows that $\hawfi  \vdash \forall x^\rho A ( x, t_0 (x))$.
  Because $\rcaw $ is an extension of $\hawfi$, we see that 
  $\rcaw \vdash \forall x^\rho A (x, t_0 (x))$.
\end{proof}

We are now prepared to prove our first uniformization theorem.

\begin{theorem}\label{719F}
Let $\forall x \exists y A(x,y)$ be a sentence of $\lang(\hawfi)$ in $\Gamma_1$.  If
\[
\hawfi + \ac + \ipwef \vdash \forall x\, \exists y\, A(x,y),
\]
then 
\[
\rcaw \vdash \forall \seqx \, \exists \seqy \, \forall n\,  A(x_n,y_n).
\]
Furthermore, if $x$ and $y$ are both type $1$
\textup{(}set\textup{)} variables, and the formula $\forall x\, \exists y
A(x,y)$ is in $\lang ( \rca )$, then $\rcaw$ may be replaced
by $\rca$ in the implication.
\end{theorem}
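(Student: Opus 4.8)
The plan is to leverage Lemma~\ref{719E}, which already extracts a uniform term from the intuitionistic proof, and then argue that this single term witnesses the sequential statement over sequences. The essential observation is that the term $t$ produced by Lemma~\ref{719E} depends only on the statement $\forall x\,\exists y\,A(x,y)$ and not on any particular input, so the same term can be applied to each coordinate of a sequence simultaneously.

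First I would apply Lemma~\ref{719E} to the hypothesis, obtaining a term $t$ of $\lang(\hawfi)$ such that $\rcaw \vdash \forall x^\rho\, A(x, t(x))$. Next, given a sequence $\seqx$, I would observe that in $\rcaw$ one may form the sequence $\langle t(X_n) \mid n \in \setN\rangle$ by applying the term $t$ pointwise; the terms of $\lang(\hawfi)$ include the apparatus (projection and substitution combinators, primitive recursion) needed to define such a sequence from the coded sequence $\seqx$ and the fixed term $t$. Instantiating the universally quantified statement $\forall x^\rho\, A(x,t(x))$ at each $x = X_n$ then yields $\forall n\, A(X_n, t(X_n))$, which furnishes the required witness sequence $\seqy$ with $Y_n = t(X_n)$. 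This establishes $\rcaw \vdash \forall \seqx\,\exists \seqy\, \forall n\, A(x_n, y_n)$.

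For the furthermore clause, I would note that when $x$ and $y$ are both type $1$ variables and $\forall x\,\exists y\,A(x,y)$ lies in $\lang(\rca)$, the extracted term $t$ can be taken to act on type $1$ objects and produce type $1$ objects, so the pointwise construction of $\langle t(X_n) \mid n \in \setN\rangle$ and the instantiation argument both go through in the weaker system $\rca$ rather than $\rcaw$. Here I would invoke Theorem~\ref{consrca} to descend from $\rcaw$ to $\rca$ for sentences in $\lang(\rca)$, after verifying that the sequential statement itself is expressible in that restricted language.

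The main obstacle I anticipate is justifying, within the weak base system, that the sequence $\langle t(X_n) \mid n \in \setN\rangle$ can actually be formed and coded as a single second-order object. This requires checking that the term-forming machinery of $\hawfi$ permits uniform application of $t$ across a coded sequence of inputs, and that the resulting object is genuinely a sequence in the sense of the classical system; this is where the uniformity inherent in term extraction does the real work, since a non-uniform (input-dependent) choice of witness would not combine into a single sequence without additional choice principles unavailable in $\rca$.
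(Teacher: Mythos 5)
Your proposal is correct and follows essentially the same route as the paper's proof: extract the uniform term $t$ via Lemma~\ref{719E}, apply it pointwise to the given sequence (the paper phrases this as $\lambda$ abstraction, forming $\lambda n.\,t(x_n)$ of type $0\to\tau$ from the sequence of type $0\to\rho$), and settle the type-$1$ case by conservativity. One caution on your furthermore clause: your first justification---that the pointwise construction ``goes through'' directly in $\rca$---is not reliable, since the extracted $t$ is a term of $\lang(\hawfi)$ and may involve higher-type recursors unavailable in $\lang(\rca)$ even when its input and output types are both $1$; it is your second justification, the appeal to Theorem~\ref{consrca} (conservativity of $\rcaw$ over $\rca$ for sentences of $\lang(\rca)$), that actually carries the final claim, and it is exactly the paper's argument.
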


\begin{proof}
  Assume that $\hawfi + \ac + \ipwef \vdash \forall x^\rho
  \exists y^\tau A(x,y)$. We may apply Lemma~\ref{719E} to
  extract the term $t$ such that $\rcaw \vdash \forall
  x^\rho A(x, t(x))$.  Working in $\rcaw$, fix any sequence
  $\seqx$.  This sequence is a function
  of type $0 \to \rho$, so by $\lambda$ abstraction we can
  construct a function of type $0 \to \tau$ defined by
  $\lambda n . t(x_n )$. Taking $\seqy$ to be this sequence, we
  obtain $\forall n\, A(x_n , y_n )$.  The final sentence
  of the theorem follows immediately from the fact that $\rcaw$ is
  a conservative extension of $\rca$ for formulas in $\lang
  (\rca )$.
\end{proof}

We now turn to a variation of Theorem~\ref{719F} that replaces
$\hawfi$ and $\rcaw$ with $\haw$ and $\rcaw_0$, respectively.
Lemmas~\ref{719G} and~\ref{719H} are proved by imitating the proofs of
Theorem~\ref{719B} and Lemma~\ref{719D}, respectively, as described in the
first paragraph of section 5.2 of Kohlenbach~\cite{Koh-book}.

\begin{lemma}\label{719G}
Let $A$ be a formula in $\lang ( \haw )$.  If\/
$
\haw +\ac + \ipwef \vdash A
$,
then there is a tuple $t$ of terms of $\lang(\haw)$ such that $\haw \vdash t \mr A$.
\end{lemma}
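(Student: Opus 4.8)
Lemma~\ref{719G} is the soundness theorem for modified realizability over $\haw + \ac + \ipwef$, that is, the exact analogue for the restricted system of Theorem~\ref{719B}. The plan is to prove it by induction on the length of a derivation of $A$ in $\haw + \ac + \ipwef$, following the proof of Theorem~\ref{719B} step for step, but checking at each stage that the extracted realizing terms and the accompanying verifications remain inside $\haw$ rather than $\hawfi$. Concretely, for each axiom and each inference rule I would produce a tuple $t$ of $\lang(\haw)$-terms together with a proof in $\haw$ that $t \mr A$, building the realizer of a conclusion from the realizers of its premises according to the clauses of Definition~\ref{719A}.

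Most cases transfer unchanged from the $\hawfi$ argument. The axioms and rules of intuitionistic predicate calculus, together with modus ponens and generalization, are realized by terms assembled from the projection and substitution combinators via $\lambda$-abstraction, all of which are already present in $\haw$. The basic arithmetical axioms, the defining axioms for the term-forming operators, and the extensionality scheme $\mathsf{E}$ are either prime or $\exists$-free, so by the remark following Definition~\ref{719A} they are realized by themselves, with empty realizers. The scheme $\ac$ is realized by a combinator term that merely reassociates the tuple witnessing $\forall x\,\exists y\,A(x,y)$ into a choice function together with its realizer. For $\ipwef$ one uses the hypothesis that $A$ is $\exists$-free: since such an $A$ has an empty realizer, a realizer of $A \to \exists x\,B(x)$ already exhibits a witnessing term $x_0$ that does not depend on any assumption realizing $A$, and the same tuple, reassociated, realizes $\exists x\,(A \to B(x))$.

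The one case that genuinely differs between $\haw$ and $\hawfi$, and which I expect to be the main obstacle, is induction. In the proof of Theorem~\ref{719B} the full induction scheme $\ia$ is realized using the higher-type recursors $R_\sigma$ --- precisely the constants that $\haw$ omits. In $\haw$, however, induction is restricted to quantifier-free formulas, and the realizer for an instance of quantifier-free induction requires only recursion at type $0$, which is available. The point to verify carefully is that quantifier-free formulas are decidable in $\haw$ (each is provably equivalent to a prime formula $t =_0 0$ via a characteristic term), so that their realizers are definable using only the combinators and the type-$0$ recursor, and the realizer of $\forall n\,A(n)$ can be produced from those of $A(0)$ and $\forall n\,(A(n) \to A(n+1))$ without appeal to any recursor $R_\sigma$ with $\sigma \neq 0$; the verification of the realizing clause itself then reduces to the quantifier-free induction axiom of $\haw$. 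Once this bookkeeping confirms that no step secretly invokes a higher-type recursor or an induction of complexity beyond quantifier-free, collecting the realizers produced along the derivation yields a tuple $t$ with $\haw \vdash t \mr A$, as required.
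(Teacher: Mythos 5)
Your proposal is correct and matches the paper's proof, which simply instructs the reader to imitate the proof of Theorem~\ref{719B} as described in the first paragraph of section~5.2 of Kohlenbach's book --- namely, rerun the induction on derivations and observe that the only case requiring care is induction, where quantifier-free induction is realized using only recursion at type~$0$, exactly as you argue. Your explicit treatment of the remaining axiom cases ($\ac$, $\ipwef$, the self-realizing $\exists$-free axioms) and of the decidability of quantifier-free formulas is just the bookkeeping the paper leaves to the citation.
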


\begin{lemma}\label{719H}
Let $A$ be a formula of $\lang(\haw)$. If $A$ is in $\Gamma_1$,
 then $\haw \vdash (t \mr A ) \to A$.
\end{lemma}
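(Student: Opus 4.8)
The plan is to prove Lemma~\ref{719H} by imitating the proof of Lemma~\ref{719D}, which is the analogous statement for the stronger system $\hawfi$. The key observation is that the relevant verification is a structural induction on the buildup of the $\exists$-free formula $A$, and the only arithmetical facts invoked at each step are extremely weak. Following the proof of Troelstra~\cite{troelstra73}*{Lemma~3.6.5} or Kohlenbach~\cite{Koh-book}*{Lemma~5.20}, one shows by induction on the structure of $A$ that $\haw$ proves $(t \mr A) \to A$ for each $A$ in $\Gamma_1$. Since both $\haw$ and $\hawfi$ share the same extensionality scheme, the same basic arithmetical axioms, and the same term-forming combinators, each individual step of the original induction transfers verbatim to $\haw$.

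First I would recall, via the ``Note'' at the end of Definition~\ref{719A}, that when $A$ is $\exists$-free the formula $t \mr A$ is identical to $A$ (after ignoring dummy quantifiers), so the implication $(t \mr A) \to A$ is trivial in that case; this handles prime formulas and, more generally, serves as a base case for the structural induction. Next I would run the induction through the clauses of Definition~\ref{719C}: the conjunction and universal-quantifier cases follow directly from the corresponding clauses (2) and (5) of the realizability definition together with the induction hypothesis; the disjunction case (clause~(3) of Definition~\ref{719A}) uses only a quantifier-free case split on whether the index variable $z$ equals $0$; the existential case (clause~(6)) is immediate since a realizer supplies a witness. The only genuinely new clause is the implication $(\exists x\, A \to B)$ with $A$ being $\exists$-free and $B$ in $\Gamma_1$; here one unfolds clause~(4) of Definition~\ref{719A} and uses the fact that $A$, being $\exists$-free, is its own realizer, so that a realizer of the premise can be fed the witness from $\exists x\, A$ to produce a realizer of $B$, which the induction hypothesis converts into $B$ itself.

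The step I expect to require the most care, and the main obstacle, is confirming that none of these inductive steps secretly relies on the additional recursors $R_\sigma$ or the full induction scheme $\ia$ present in $\hawfi$ but absent from $\haw$. Each case above is a purely logical manipulation of realizers combined with quantifier-free case distinctions, so in principle only the quantifier-free induction already available in $\haw$ is needed; I would verify explicitly that the terms $t$ appearing in the implications are among those definable from the projection and substitution combinators of $\lang(\haw)$, rather than requiring the higher recursors. Once this is checked, the entire argument of Lemma~\ref{719D} goes through word for word with $\hawfi$ replaced by $\haw$, yielding the claimed result.
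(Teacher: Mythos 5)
Your proposal is correct and takes essentially the same route as the paper, which proves Lemma~\ref{719H} precisely by imitating the proof of Lemma~\ref{719D} as licensed by the first paragraph of section~5.2 of Kohlenbach's book, where it is observed that the structural induction (Kohlenbach's Lemma~5.20, Troelstra's Lemma~3.6.5) uses no recursors beyond type~$0$ and no induction beyond the quantifier-free scheme. Your explicit check of the clause $(\exists x\, A \to B)$ and of the definability of the realizing terms from the combinators of $\lang(\haw)$ is exactly the verification the paper leaves implicit.
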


\begin{lemma}\label{719I}
  Let $\forall x^\rho\, \exists y^\tau A(x,y)$ be a sentence of $\lang(\haw)$
  in $\Gamma_1$, where $\rho$ and $\tau$ are arbitrary types.
  If
  \[
\haw + \ac + \ipwef \vdash \forall x^\rho \exists y^\tau A(x,y),
  \]
  then $\rcaw_0 \vdash \forall x^\rho A(x, t(x))$, where $t$
  is a suitable term of\/~$\lang(\haw)$.
\end{lemma}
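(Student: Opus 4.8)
The plan is to imitate the proof of Lemma~\ref{719E} exactly, substituting the quantifier-free-induction systems $\haw$ and $\rcaw_0$ for the full-induction systems $\hawfi$ and $\rcaw$. The key observation is that every ingredient used in the proof of Lemma~\ref{719E} has an already-stated analogue for the weaker systems: Theorem~\ref{719B} is replaced by Lemma~\ref{719G}, and Lemma~\ref{719D} is replaced by Lemma~\ref{719H}. Since the definition of $\mr$ (Definition~\ref{719A}) is given for $\lang(\hawfi)$ and $\lang(\haw)$ is a sublanguage, clauses (5) and (6) of that definition apply verbatim to formulas of $\lang(\haw)$.

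First I would assume $\haw + \ac + \ipwef \vdash \forall x^\rho\, \exists y^\tau A(x,y)$ with $A$ in $\Gamma_1$. Applying Lemma~\ref{719G} yields a tuple $t$ of terms of $\lang(\haw)$ with $\haw \vdash t \mr \forall x^\rho\, \exists y^\tau A(x,y)$. Next I would unwind the realizability clauses just as before: clause (5) of Definition~\ref{719A} gives $\haw \vdash \forall x^\rho\, (t(x) \mr \exists y^\tau A(x,y))$, and clause (6) shows that $t$ splits as $t_0, t_1$ so that $\haw \vdash \forall x^\rho\, [t_1(x) \mr A(x, t_0(x))]$. Since $A$ is in $\Gamma_1$, Lemma~\ref{719H} delivers $\haw \vdash \forall x^\rho\, A(x, t_0(x))$. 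Finally, because $\rcaw_0$ is an extension of $\haw$, we conclude $\rcaw_0 \vdash \forall x^\rho\, A(x, t_0(x))$, which is the desired statement with $t_0$ as the witnessing term.

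There is essentially no obstacle here beyond bookkeeping, because the difference between this lemma and Lemma~\ref{719E} is confined entirely to the choice of background system, and the two replacement lemmas have been arranged precisely to absorb that difference. The one point worth checking is that the term $t$ extracted by Lemma~\ref{719G} really is a term of $\lang(\haw)$ rather than $\lang(\hawfi)$ — but this is exactly the content of Lemma~\ref{719G}, which restricts both the hypothesis and the extracted term to the language of $\haw$. Thus the only substantive work has already been done in establishing Lemmas~\ref{719G} and~\ref{719H}, and the proof of Lemma~\ref{719I} is a direct transcription of the proof of Lemma~\ref{719E} with the system names changed.
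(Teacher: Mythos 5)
Your proposal is correct and matches the paper's proof exactly: the paper's entire argument for Lemma~\ref{719I} is ``imitate the proof of Lemma~\ref{719E}, substituting Lemma~\ref{719G} for Theorem~\ref{719B} and Lemma~\ref{719H} for Lemma~\ref{719D},'' which is precisely your plan, with the realizability clauses unwound just as you describe. Your additional check that the extracted term lies in $\lang(\haw)$ is a sound piece of bookkeeping that the paper leaves implicit in the statement of Lemma~\ref{719G}.
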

\begin{proof}
  Imitate the proof of Lemma~\ref{719E}, substituting Lemma~\ref{719G}
  for Theorem~\ref{719B} and Lemma~\ref{719H} for Lemma~\ref{719D}.
\end{proof}

We now obtain our second uniformization theorem. This is the theorem discussed in the
introduction, where $I_0$ refers to the theory $\haw + \ac + \ipwef$.

\begin{theorem}\label{719J}
Let $\forall x\, \exists y A(x,y)$ be a sentence of $\lang(\haw)$ in $\Gamma_1$.  If
\[
\haw + \ac + \ipwef \vdash \forall x \exists y\, A(x,y),
\]
then 
\[
\rcaw_0 \vdash \forall \seqx \, \exists \seqy \, \forall n\,  A(x_n,y_n).
\]
Furthermore, if $x$ and $y$ are both type $1$
\textup{(}set\textup{)} variables, and the formula $\forall x\, \exists y
A(x,y)$ is in $\lang ( \rca_0 )$, then $\rcaw_0$ may be replaced
by $\rca_0$ in the implication.
\end{theorem}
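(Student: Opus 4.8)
The plan is to imitate the proof of Theorem~\ref{719F} exactly, substituting the $\haw$-based machinery for the $\hawfi$-based machinery at each step. The structure of the earlier proof is purely formal: it extracts a realizing term, uses $\lambda$ abstraction to build a sequence-valued function, and then appeals to a conservation result. Each of these ingredients has an analogue available for the weaker systems.

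First I would assume $\haw + \ac + \ipwef \vdash \forall x^\rho\, \exists y^\tau A(x,y)$ with $A(x,y)$ in $\Gamma_1$. Applying Lemma~\ref{719I} in place of Lemma~\ref{719E}, I extract a suitable term $t$ of $\lang(\haw)$ such that $\rcaw_0 \vdash \forall x^\rho A(x, t(x))$. Next, working inside $\rcaw_0$, I fix an arbitrary sequence $\seqx$, which is simply a function of type $0 \to \rho$. By $\lambda$ abstraction, available in $\haw$ via the projection and substitution combinators, I form the function $\lambda n.\, t(x_n)$ of type $0 \to \tau$ and take $\seqy$ to be this sequence. Since $\forall x^\rho A(x, t(x))$ holds, instantiating at each $x_n$ yields $\forall n\, A(x_n, y_n)$, giving the desired sequential statement in $\rcaw_0$.

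For the final clause, when $x$ and $y$ are both type $1$ and $\forall x\, \exists y\, A(x,y)$ lies in $\lang(\rca_0)$, I would appeal to Theorem~\ref{consrcao}, the conservation result of Kohlenbach stating that $\rcaw_0$ is conservative over $\rca_0$ for sentences in $\lang(\rca_0)$. Since the sequential form $\forall \seqx\, \exists \seqy\, \forall n\, A(x_n, y_n)$ is itself a sentence in $\lang(\rca_0)$ under this type restriction (sequences of type-$1$ objects being encoded as type-$1$ objects), its provability in $\rcaw_0$ transfers to $\rca_0$.

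The steps here are routine given the earlier results, so the only genuine subtlety is ensuring that the $\lambda$-abstraction step is legitimately available in the weaker system $\haw$ (hence in $\rcaw_0$) rather than relying on the stronger recursors of $\hawfi$. This is precisely why $\haw$ was equipped with the projection and substitution combinators in its definition: the function $\lambda n.\, t(x_n)$ is built by term formation alone and requires no recursion beyond what $\haw$ provides. I expect this to be the main point requiring care, but it is resolved directly by the term-forming apparatus of $\haw$, so the proof amounts to remarking that the argument for Theorem~\ref{719F} goes through verbatim with Lemma~\ref{719I} and Theorem~\ref{consrcao} replacing their full-induction counterparts.
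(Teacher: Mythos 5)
Your proposal matches the paper's proof exactly: the paper also proves Theorem~\ref{719J} by running the argument of Theorem~\ref{719F} with Lemma~\ref{719I} in place of Lemma~\ref{719E} and Theorem~\ref{consrcao} as the conservation result, noting (as you do) that the original argument used no induction or higher-type recursors, so the $\lambda$-abstraction step goes through in $\haw$. Your identification of the $\lambda$-abstraction as the only point needing care is precisely the observation the paper makes in justifying the parallel proof.
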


The proof is parallel to that of Theorem~\ref{719F}, which did not make
use of induction or recursors on higher types. Theorem~\ref{consrcao} 
serves as the conservation result to prove the final claim.

\section{Unprovability results}\label{sec4}

We now demonstrate several theorems of core mathematics which are
provable in $\RCAo$ but have sequential versions that are not provable
in $\rca$.  In light of Theorem~\ref{719F}, such theorems are not
provable in $\hawfi + \ac + \ipwef$. Where possible, we carry out
proofs using restricted induction, as this gives additional
information on the proof-theoretic strength of the principles being
studied.  The terminology in the following theorem is well known; we
give formal definitions as needed later in the section.

\begin{theorem}\label{thm1}
  Each of the following statements is provable in\/ $\RCAo$
  but not provable in\/ $\hawfi + \ac + \ipwef$.
  \begin{enumerate}
  \item Every $2 \times 2$ matrix has a Jordan
    decomposition.
  \item Every quickly converging Cauchy sequence of
    rational numbers can be converted to a Dedekind cut
    representing the same real number.
  \item Every enumerated filter on a countable poset can be
    extended to an unbounded enumerated filter.
  \end{enumerate}%
\end{theorem}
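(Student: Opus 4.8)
The plan is to treat each statement by the two-part strategy of the previous section: first prove the statement outright in $\RCAo$, using the law of the excluded middle to make a case distinction that need not be uniform, and then show that the corresponding sequential statement fails in the recursive model $\REC$ and invoke the contrapositive of Theorem~\ref{719F}. For the second step I must first present each statement in the form $\forall x\,\exists y\,A(x,y)$ with $x$ and $y$ of type $1$ and with $A$ in $\Gamma_1$ and in $\lang(\rca)$; once this is verified, Theorem~\ref{719F} shows that provability in $\hawfi + \ac + \ipwef$ would force provability of the sequential form in $\rca$. Since $\REC$ is a model of $\rca$, a single computable sequence of inputs admitting no computable sequence of correct outputs suffices to refute the sequential form and hence to rule out provability in $\hawfi + \ac + \ipwef$.

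For the provability half I would argue in each case by a classical case split on a statement whose witness $\RCAo$ cannot compute uniformly but whose truth value is decided, for the single given instance, by the law of the excluded middle. For statement~(2), given a quickly converging Cauchy sequence $x$, apply the excluded middle to $\exists q \in \setQ\,(x = q)$: if $x$ equals some rational $q_0$ then $\{q : q < q_0\}$ is decidable, while if $x$ is irrational then $q < x$ and $q > x$ give complementary $\Sigma^0_1$ and $\Pi^0_1$ definitions of the cut, so $\Delta^0_1$ comprehension produces it; in either branch the cut exists. For statement~(1), split on the sign of the discriminant $\mathrm{tr}^2 - 4\det$, and in the degenerate case on whether the matrix is scalar, writing down explicit $J$ and $P$ with $x = PJP^{-1}$ in each branch. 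In both arguments the branching is legitimate classically even though no uniform choice of branch is computable, which is precisely the tension the theorem exploits.

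For the unprovability half I would encode the halting set $K$ by a computable sequence of reals $\seqx$ with $x_n > 0$ if and only if $n \in K$ (for instance $x_n = 2^{-s}$ where $s$ is the stage at which $n$ enters $K$, and $x_n = 0$ otherwise). For statement~(1) feed these into the nilpotent matrices $\bigl(\begin{smallmatrix} 0 & x_n \\ 0 & 0\end{smallmatrix}\bigr)$, whose Jordan form is the zero matrix when $x_n = 0$ and a nontrivial nilpotent block when $x_n \neq 0$; a computable sequence of Jordan decompositions would decide $K$. For statement~(2) the cut $L_n = \{q : q < x_n\}$ satisfies $0 \in L_n \leftrightarrow n \in K$, so a computable sequence of cuts again computes $K$. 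In each case the sequential statement fails in $\REC$, hence is unprovable in $\rca$, and Theorem~\ref{719F} delivers the conclusion. Statement~(3) follows the same template once a countable poset and enumerated filter are arranged so that the unbounded extension codes a noncomputable set along the sequence.

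The main obstacle, I expect, is twofold. First, each informal statement must be formalized so that its correctness predicate $A(x,y)$ genuinely lies in $\Gamma_1$; the delicate point is that implications enter $\Gamma_1$ only with $\exists$-free antecedents, so the quantifier placement in a clause such as ``$y$ represents the same real as $x$'' (which unfolds to $\forall q\,[(q \in y \to \exists n\,\ldots) \land (\exists n\,\ldots \to q \in y)]$) must be checked against Definition~\ref{719C}. Second, for statement~(3) the combinatorial set-up of enumerated filters on a countable poset must be chosen so that an unbounded extension provably exists in $\RCAo$ for each instance yet computes a noncomputable object along a sequence; fixing definitions so that both the $\RCAo$ proof and the $\REC$ counterexample go through is the most design-heavy part of the argument.
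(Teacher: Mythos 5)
Your architecture is the paper's: prove each statement in $\RCAo$ via non-uniform excluded-middle case splits, check $\Gamma_1$ membership, and derive unprovability in $\hawfi + \ac + \ipwef$ from the failure of the sequential form in $\rca$ via Theorem~\ref{719F}. For statement (1) the paper's Lemma~\ref{s3l1} performs essentially your case analysis (distinct eigenvalues, scalar matrix, nontrivial nilpotent part), and for statement (2) it cites Hirst's results; your rational/irrational split is exactly the right idea there. The one structural difference is in the unprovability half: the paper proves reversals over $\RCAo$ (the sequential Jordan and filter statements imply arithmetical comprehension, using $M(z)=\bigl(\begin{smallmatrix}1&0\\z&1\end{smallmatrix}\bigr)$ rather than your nilpotent matrices, and the sequential cut statement is equivalent to weak K\"onig's lemma), whereas you refute the sequential forms directly in $\REC$. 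Your route is legitimate --- $\REC$ is an $\omega$-model of $\rca$, so full induction costs nothing --- and your $K$-codings work; the reversal route buys exact calibration but is not needed for the theorem as stated. (One small repair: under the paper's convention that a cut $Y$ is equivalent to $x$ when $q\in Y\leftrightarrow q\le x$, your nonnegative reals make the query ``$0\in Y_n$'' uninformative, since $0\le x_n$ always holds and ``$\exists s\,(2^{-s}\in Y_n)$'' is merely $\Sigma^0_1$; taking $x_n=-2^{-s}$ fixes this. With your strict-cut convention the coding is fine as written.)

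The genuine gap is statement (3), which your proposal reduces to a template without any construction. Both halves are substantive theorems, not routine adaptations: the paper invokes Lempp and Mummert's results that $\RCAo$ proves every enumerated filter extends to an unbounded one (a nontrivial construction, unlike the finite case splits in (1) and (2)) and that the sequential form is equivalent to arithmetical comprehension over $\RCAo$. Worse, there is a formalization obstruction that your generic ``check quantifier placement'' remark does not resolve and that actually bites here, unlike in (2): with the usual definition, ``$f$ is an enumerated filter'' contains the clause $\forall i\,\forall j\,\exists k\,(f(k)\preceq f(i)\land f(k)\preceq f(j))$, and this $\forall\exists$ formula sits in the \emph{antecedent} of the main implication. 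Clause (3) of Definition~\ref{719C} admits only antecedents of the form $\exists x\,A$ with $A$ $\exists$-free, so the statement as usually phrased is simply not in $\Gamma_1$ and Theorem~\ref{719F} does not apply to it. The paper repairs this by changing the definition of enumerated filter to include an auxiliary function $h$ with $f(h(i,j))\preceq f(i)$ and $f(h(i,j))\preceq f(j)$, Skolemizing the offending existential and moving $h$ into the universally quantified input; without this move (or an equivalent one), your plan for (3) cannot get off the ground, independently of how the poset counterexample is designed.
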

There are  many
other statements that are provable in $\RCAo$ but not
$\hawfi + \ac + \ipwef$; we have chosen these three to
illustrate the what we believe to be the ubiquity of this
phenomenon in various branches of core mathematics.  

We will show that each of the statements (\ref{thm1}.1)--(\ref{thm1}.3) 
is unprovable in $\hawfi + \ac + \ipwef$ by noting that each statement is
in $\Gamma_1$ and showing that the sequential form of each statement
implies a strong comprehension axiom over $\RCAo$. Because these
strong comprehension axioms are not provable even with the added
induction strength of $\rca$, we may apply Theorem~\ref{719F} to
obtain the desired results.  The stronger comprehension axioms include
weak K\"onig's lemma and the arithmetical comprehension scheme, which
are discussed thoroughly by Simpson~\cite{Simpson-SOSOA}.

We begin with statement (\ref{thm1}.1).  We consider only
finite square matrices whose entries are complex numbers
represented by quickly converging Cauchy sequences.  In
$\RCAo$, we say that a matrix $M$ \define{has a
  Jordan decomposition} if there are matrices $(U, J)$ such
that $M = U J U^{-1}$ and $J$ is a matrix consisting of
Jordan blocks. We call $J$ the \textit{Jordan canonical
  form} of $M$. The fundamental definitions and theorems
regarding the Jordan canonical form 
are presented by Halmos~\cite{Halmos-FDVS}*{Section~58}.
Careful formalization of (\ref{thm1}.1) shows that this principle can
be expressed by a $\Pi^1_2$ formula in $\Gamma_1$; the key point is
that the assumptions on $M$, $U$, $J$, and $U^{-1}$ can be
expressed using only equality of 
real numbers, which requires only universal
quantification.

\begin{lemma}\label{s3l1}
  $\RCAo$ proves that every $2 \times 2$ matrix has a
  Jordan decomposition.
\end{lemma}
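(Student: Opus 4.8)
The plan is to prove that $\RCAo$ can construct, for any $2 \times 2$ matrix $M$ with complex entries (given as quickly converging Cauchy sequences), matrices $U$ and $J$ such that $M = UJU^{-1}$ with $J$ in Jordan canonical form. The essential difficulty is that the structure of the Jordan form for a $2 \times 2$ matrix depends on whether the two eigenvalues are equal, and if equal, on whether $M$ is already diagonal (scalar). These equality tests on real (and hence complex) numbers are exactly the kind of discontinuous case distinctions that $\RCAo$ cannot in general decide. So the main obstacle is to produce a \emph{single} pair $(U,J)$, verified by universally-quantified equalities, that works uniformly across all three cases without requiring $\RCAo$ to decide which case holds.

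First I would set up the computation explicitly. Writing $M = \left(\begin{smallmatrix} a & b \\ c & d \end{smallmatrix}\right)$, the characteristic polynomial is $\lambda^2 - (a+d)\lambda + (ad - bc)$, whose roots are $\lambda_{1,2} = \tfrac{1}{2}\bigl((a+d) \pm \sqrt{(a-d)^2 + 4bc}\bigr)$. Since we work over $\mathbb{C}$, the square root exists; I would note that $\RCAo$ can compute a complex square root of a given complex number as a quickly converging Cauchy sequence (this is a standard $\RCAo$ fact about continuous operations on reals, and the branch discontinuity of $\sqrt{\phantom{z}}$ at $0$ does not obstruct producing \emph{some} Cauchy sequence for a root). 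This gives candidate eigenvalues as genuine complex numbers computable from the entries of $M$.

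The key move to evade the case distinction is to avoid ever testing equalities. I would instead exhibit conjugating matrices whose entries are given by \emph{explicit algebraic formulas} in $a,b,c,d$ and the computed eigenvalue(s), chosen so that the defining equation $MU = UJ$ holds as an identity of complex numbers regardless of which case obtains. Concretely, the verification $M = UJU^{-1}$ is equivalent to $MU = UJ$ together with $U$ being invertible, and both of these reduce to equalities of real numbers (real and imaginary parts), which are universally quantified formulas and therefore place the principle in $\Gamma_1$ as claimed. The work is to check, by direct algebraic computation, that the proposed $U$ is invertible (its determinant is nonzero, which again is an equality-type condition that can be arranged by the explicit formula rather than decided) and that $MU = UJ$; because these are polynomial identities they hold in every case simultaneously once the eigenvalues satisfy the characteristic equation.

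I expect the main obstacle, and the place requiring genuine care, to be constructing $U$ uniformly so that it is provably invertible without a case split: in the repeated-eigenvalue non-scalar case the natural eigenvector construction degenerates, so one must blend the diagonalizing and the Jordan-block constructions into one formula, or argue that a uniform choice of columns (for instance built from the adjugate of $M - \lambda I$ together with a fixed vector) always spans. I would handle this by giving an explicit $U$ whose columns are defined by a single formula and verifying its determinant is a nonvanishing expression in the entries, then confirming $MU = UJ$ by the polynomial identities above. Once that algebra is discharged, the whole construction goes through in $\RCAo$ since every step is a continuous (Cauchy-sequence-level) operation and every verification is a universally quantified equality.
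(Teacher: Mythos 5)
Your proposal is built on a false constraint, and that constraint drives you into a construction that is provably impossible. You assert that $\RCAo$ ``cannot in general decide'' equality of reals and so the proof must avoid case distinctions; but $\RCAo$ is a \emph{classical} system containing the full law of the excluded middle, so inside a proof of a single existential statement one may freely split into cases on whether the eigenvalues of $M$ coincide and, if they do, on whether $M$ has two linearly independent eigenvectors. The existence of $(U,J)$ then follows by $\lor$-elimination even though no algorithm decides which case holds. This is exactly what the paper's proof does: the distinct-eigenvalue case and the repeated-eigenvalue diagonalizable case are immediate, and in the remaining case it takes any eigenvector $u$, extends it to a basis $\{u,v\}$, writes $(M-\lambda I)v = au+bv$, and observes that $(M-\lambda I)(au+bv) = b(M-\lambda I)v$ would make $b$ an eigenvalue of $M-\lambda I$, forcing $b=0$; hence $\{u,v\}$ is a chain of generalized eigenvectors from which the decomposition is computed. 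The paper even remarks immediately afterward that this proof uses excluded middle --- that is the whole point, since the lemma is later shown to be \emph{unprovable} in the intuitionistic system. Discontinuity of the Jordan form obstructs only the sequential version, not the single-instance statement.

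Moreover, the uniform construction you propose as the crux --- a single pair $(U,J)$ given by explicit algebraic formulas in $a,b,c,d$ and the eigenvalues, valid in all cases --- cannot exist. Take the paper's own family $M(z) = \left(\begin{smallmatrix}1 & 0\\ z & 1\end{smallmatrix}\right)$: the trace is $2$ and the determinant is $1$ for every $z$, so the discriminant vanishes identically and the eigenvalues are constantly $1$, depending continuously (indeed constantly) on $z$. Any $J$ given by an algebraic expression in the entries and eigenvalues would then depend continuously on $z$, yet the upper-right entry of the correct Jordan form is $0$ at $z=0$ and $1$ for $z\neq 0$. Equivalently, if your uniform formula existed, applying it termwise to a sequence of matrices would prove the sequential Jordan-decomposition principle in $\RCAo$, contradicting the lemma immediately following this one in the paper, which shows that principle implies arithmetical comprehension over $\RCAo$. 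So the step you flag as ``requiring genuine care'' --- blending the three cases into one invertible $U$ --- is not merely hard but impossible, and the correct route is the classical case split you set out to avoid.
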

\begin{proof}
  Let $M$ be a $2 \times 2$ matrix.  $\RCAo$ proves that the
  eigenvalues of $M$ exist and that for each eigenvalue
  there is an eigenvector.  (Compare Exercise~II.4.11
  of Simpson~\cite{Simpson-SOSOA}, which notes that the
  basics of linear algebra, including fundamental properties
  of Gaussian elimination, are provable in $\RCAo$.)  If the
  eigenvalues of $M$ are distinct, then the Jordan
  decomposition is trivial to compute from the eigenvalues
  and eigenvectors.  If there is a unique eigenvalue and
  there are two linearly independent eigenvectors then the
  Jordan decomposition is similarly trivial to compute.  

  Suppose that $M$ has a unique eigenvalue $\lambda$ but not
  two linearly independent eigenvectors.  Let $u$ be any
  eigenvector and let $\{u,v\}$ be a basis.  It follows that
  $(M - \lambda I)v = au + bv$ is nonzero.  Now $(M -
  \lambda I)(au + bv) = b(M-\lambda I)v$, because $u$ is an
  eigenvector of $M$ with eigenvalue $\lambda$.  This shows
  $(M - \lambda I)$ has eigenvalue~$b$, which can only
  happen if $b = 0$, that is, if $(M - \lambda I)v$ is a
  scalar multiple of $u$.  Thus $\{u,v\}$ is a chain of
  generalized eigenvectors of $M$; the Jordan decomposition can be
  computed directly from this chain.
\end{proof}

It is not difficult to see that the previous proof makes
use of the law of the excluded middle. 
%

\begin{remark}
  Proofs similar to that of Lemma~\ref{s3l1}
  can be used to show that for each standard natural number
  $n$ the principle that every $n \times n$ matrix has a
  Jordan decomposition is provable in $\RCAo$.  We do not
  know whether the principle that every finite matrix has a
  Jordan decomposition is provable in~$\RCAo$.
\end{remark}

The next lemma is foreshadowed by previous research.  It is well known
that the function that sends a matrix to its Jordan decomposition is
discontinuous.  Kohlenbach~\cite{Koh-HORM} has shown that, over the
extension $\RCAo^\omega$ of $\RCAo$ to all finite types, the existence
of a higher-type object encoding a non-sequentially-continuous
real-valued function implies the principle $\exists^2$. In turn,
$\rcaw + \exists^2$ proves every instance of the arithmetical
comprehension scheme.

\begin{lemma}
  The following principle implies arithmetical comprehension over\/ $\RCAo$
  \textup{(}and hence over $\rca$\textup{)}. For every
  sequence $\langle M_i \mid i \in \setN\rangle$ of $2
  \times 2$ real matrices, such that each matrix $M_i$ has
  only real eigenvalues, there are sequences $\langle U_i
  \mid i \in \setN \rangle$ and $\langle J_i \mid i \in
  \setN \rangle$ such that $(U_i,J_i)$ is a Jordan
  decomposition of $M_i$ for all $i \in \setN$.
\end{lemma}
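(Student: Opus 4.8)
The plan is to derive the arithmetical comprehension scheme over $\RCAo$ from the sequential Jordan principle by establishing that the range of an arbitrary one-to-one function exists, a standard equivalent of arithmetical comprehension (Simpson~\cite{Simpson-SOSOA}*{Lemma~III.1.3}). The engine is the discontinuity already noted above: the nilpotent matrix $\begin{pmatrix} 0 & \epsilon \\ 0 & 0 \end{pmatrix}$ is diagonalizable precisely when $\epsilon = 0$, so its Jordan canonical form is $\begin{pmatrix} 0 & 0 \\ 0 & 0 \end{pmatrix}$ when $\epsilon = 0$ and the single Jordan block $\begin{pmatrix} 0 & 1 \\ 0 & 0 \end{pmatrix}$ when $\epsilon \ne 0$. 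The $(1,2)$ entry of the Jordan form is thus a $0/1$-valued datum recording whether $\epsilon$ vanishes, and this bit will encode range membership.

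First I would fix a one-to-one $f \colon \setN \to \setN$ and build, inside $\RCAo$, a sequence $\langle \epsilon_i \mid i \in \setN\rangle$ of reals with $\epsilon_i = 0$ when $i \notin \operatorname{ran}(f)$ and $\epsilon_i > 0$ when $i \in \operatorname{ran}(f)$. Concretely, I take the stage-$n$ rational approximation of $\epsilon_i$ to be $2^{-s}$ when some $s \le n$ has $f(s) = i$, and $0$ otherwise; injectivity of $f$ makes this a quickly converging Cauchy sequence whose value is $2^{-s} > 0$ exactly when $i = f(s)$ for some $s$, and $0$ when $i \notin \operatorname{ran}(f)$. The sequence of matrices $M_i = \begin{pmatrix} 0 & \epsilon_i \\ 0 & 0 \end{pmatrix}$ then exists in $\RCAo$; each $M_i$ has characteristic polynomial $x^2$ and hence only the real eigenvalue $0$, so the hypothesis of the principle is satisfied.

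Applying the principle produces sequences $\langle U_i\rangle$ and $\langle J_i\rangle$ with $M_i = U_i J_i U_i^{-1}$ and each $J_i$ a matrix of Jordan blocks. Since $J_i$ is similar to the $2\times 2$ nilpotent matrix $M_i$, it is upper triangular with both diagonal entries $0$, and so it is one of the two matrices displayed above; in particular its $(1,2)$ entry is a real number equal to exactly $0$ or exactly $1$, taking the value $1$ precisely when $M_i \ne 0$, i.e.\ when $i \in \operatorname{ran}(f)$. Because this entry is known in advance to lie in $\{0,1\}$, a single sufficiently accurate rational approximation decides its value: letting $q_i$ be the stage-$2$ approximation to the $(1,2)$ entry of $J_i$, quick convergence gives $|q_i - (J_i)_{12}| \le 1/4$, whence $(J_i)_{12} = 1$ if and only if $q_i > 1/2$. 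Therefore $\operatorname{ran}(f) = \{\, i : q_i > 1/2 \,\}$ is defined by a decidable condition on the rationals $q_i$ and exists by the $\Delta^0_1$ comprehension available in $\RCAo$, which completes the derivation.

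The main obstacle—indeed the only subtle point—is guaranteeing that the datum read off from the extracted Jordan forms is genuinely discrete and uniformly decidable, rather than a discontinuous real-valued quantity that $\RCAo$ cannot evaluate. This is exactly where the rigidity of the Jordan form on $2 \times 2$ nilpotent matrices is indispensable: the $(1,2)$ entry cannot take intermediate values, so one fixed finite approximation separates the two cases uniformly in $i$. The remaining verifications—that $\langle \epsilon_i\rangle$ and $\langle M_i\rangle$ are legitimate sequences in $\RCAo$, and that the principle's output $\langle J_i\rangle$ is a bona fide sequence of real matrices from which $q_i$ can be extracted—are routine bookkeeping about the coding of sequences of matrices.
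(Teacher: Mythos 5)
Your proof is correct and follows essentially the same route as the paper's: both encode membership in the range of a function into a one-parameter family of matrices whose Jordan canonical form has a $(1,2)$ entry jumping discontinuously between $0$ and $1$, then recover the range effectively because that entry is a priori $\{0,1\}$-valued (the paper uses $\begin{pmatrix}1 & 0\\ z & 1\end{pmatrix}$ where you use the nilpotent $\begin{pmatrix}0 & \epsilon\\ 0 & 0\end{pmatrix}$, an immaterial difference). Your write-up is in fact slightly more explicit than the paper's, spelling out the quickly converging Cauchy construction of $\epsilon_i$ and the rational-approximation threshold used to decide the $(1,2)$ entry uniformly in~$i$.
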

\begin{proof}
  We first demonstrate a concrete example of the
  discontinuity of the Jordan form.  For any real $z$, let
  $M(z)$ denote the matrix
\[
M(z) = \begin{pmatrix}1 & 0 \\ 
z & 1 \end{pmatrix}.\]
The matrix $M(0)$ is the identity matrix, and so is its Jordan 
canonical form.
If $z \not = 0$ then $M(z)$ has the following Jordan decomposition:
\[
M(z) =
\begin{pmatrix}
1 & 0 \\
z & 1
\end{pmatrix}
=
\begin{pmatrix}
0 & 1\\
z & 0
\end{pmatrix}
\begin{pmatrix}
1 & 1 \\
0 & 1 
\end{pmatrix}
\begin{pmatrix}
0 & 1\\
z & 0
\end{pmatrix}
^{-1}.
\]
The crucial fact is that the entry in the upper-right-hand
corner of the Jordan canonical form of $M(z)$ is $0$ if $z =
0$ and $1$ if $z \not = 0$.

Let $h$ be an arbitrary function from $\setN$ to $\setN$.
We will assume the principle of the theorem and show that
the range of $h$ exists; this is sufficient to establish the desired result.
  It is well known that
$\mathsf{RCA}_0$ can construct a function $n \mapsto z_n$
that assigns each $n$ a quickly converging Cauchy sequence
$z_n$ such that, for all $n$, $z_n = 0$ if and only $n$ is
not in the range of $h$.  Form a sequence of matrices
$\langle M(z_n) \mid n \in \setN\rangle$; according to the
principle, there is an associated sequence of Jordan
canonical forms.  The upper-right-hand entry of each of
these canonical forms is either $0$ or $1$, and it is possible
to effectively decide between these two cases.  Thus, in
$\mathsf{RCA}_0$, we may form the range of $h$ using the
sequence of Jordan canonical forms as a parameter.
\end{proof}

We now turn to statement (\ref{thm1}.2).  Recall that the
standard formalization of the real numbers in $\RCAo$, as
described by Simpson~\cite{Simpson-SOSOA}, makes use of
quickly converging Cauchy sequences of rationals.
Alternative formalizations of the real numbers may be
considered, however.  We define a \textit{Dedekind cut} to
be a subset $Y$ of the rational numbers such that both $Y$ and $\setQ \setminus Y$ 
are nonempty, and if $p \in Y$ and $q < p$ then
$q \in Y$.  We say that a Dedekind cut $Y$ is
\textit{equivalent} to a quickly converging Cauchy sequence
$\langle a_i \mid i \in \setN\rangle$ if any only if the
equivalence
\[
q \in Y \Leftrightarrow q \leq \lim_{i\rightarrow \infty} a_i
\]
holds for every rational number $q$.   Formalization of (\ref{thm1}.2) shows that
it is in $\Gamma_1$.

Hirst~\cite{Hirst-RRRM} has proved the following results
that relate Cauchy sequences with Dedekind cuts.
Together with Theorem~\ref{719F}, these results show that statement
(\ref{thm1}.2) is provable in $\RCAo$ but not $\hawfi + \ac + \ipwef$.

\begin{lemma}[Hirst~\cite{Hirst-RRRM}*{Corollary 4}] The
  following is provable in $\RCAo$.  For each quickly
  converging Cauchy sequence $x$ there is an equivalent
  Dedekind cut.
\end{lemma}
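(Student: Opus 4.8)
The plan is to construct the cut $Y = \{q \in \setQ : q \leq x\}$ directly, where $x = \lim_i a_i$, and then to verify that it is a Dedekind cut equivalent to $\langle a_i \mid i \in \setN\rangle$ in the sense defined above. Writing the quickly converging condition so that $|a_i - a_j| \leq 2^{-i}$ for $j \geq i$, one first records the basic estimate $a_i - 2^{-i} \leq x \leq a_i + 2^{-i}$, from which $q < x$ is provably equivalent to the $\Sigma^0_1$ condition $\exists i\,(q < a_i - 2^{-i})$, while $q \leq x$ is provably equivalent to the $\Pi^0_1$ condition $\forall i\,(q \leq a_i + 2^{-i})$. The difficulty is that neither description is by itself $\Delta^0_1$: when $x$ happens to be rational the two conditions disagree exactly at $q = x$, so naive $\Sigma^0_1$ comprehension would omit the top point of the closed cut, and $\RCAo$ cannot uniformly decide which case obtains.

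First I would invoke the law of the excluded middle, available in $\RCAo$, to split on whether $x$ is rational: either $\exists r \in \setQ\,(x = r)$ or $\forall r \in \setQ\,(x \neq r)$. In the rational case, fix $r$ with $x = r$; then $Y = \{q \in \setQ : q \leq r\}$ is defined by a quantifier-free comparison of rationals against the fixed $r$, so it exists by $\Delta^0_1$ (indeed $\Delta^0_0$) comprehension, and the equivalence $q \in Y \Leftrightarrow q \leq x$ is immediate. In the irrational case, apartness $x \neq q$ yields some $i$ with $|a_i - q| > 2^{-i}$, whose sign decides between $q < x$ and $q > x$; hence every rational satisfies $q < x$ or $q > x$. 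Under this case hypothesis the two estimates above give $\exists i\,(q < a_i - 2^{-i}) \Leftrightarrow \forall i\,(q \leq a_i + 2^{-i})$, and this provable $\Sigma^0_1 \leftrightarrow \Pi^0_1$ equivalence is precisely what $\Delta^0_1$ comprehension requires, so $Y = \{q : \exists i\,(q < a_i - 2^{-i})\}$ exists; here $q \in Y \Leftrightarrow q < x \Leftrightarrow q \leq x$.

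In either case I would then check the remaining clauses of the definition of a Dedekind cut. Downward closure is immediate from transitivity of $\leq$, and both $Y$ and its complement are nonempty because the initial term supplies $|a_0 - x| \leq 1$, so $a_0 - 2 \in Y$ and $a_0 + 2 \notin Y$. Since $q \in Y \Leftrightarrow q \leq \lim_i a_i$ has been verified in both cases, the construction is complete.

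The main obstacle is the irrational case, specifically the verification that under the hypothesis $\forall r\,(x \neq r)$ the $\Sigma^0_1$ and $\Pi^0_1$ descriptions of $Y$ coincide; this is the only place where $\Delta^0_1$ comprehension is genuinely invoked, and it is where the non-uniformity of the construction — the reason the sequential form is strictly stronger, as exploited later in the paper — is localized. Everything else reduces to routine manipulation of the fast-convergence estimates.
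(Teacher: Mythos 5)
Your proof is correct and takes essentially the same approach as the source: the paper gives no internal proof, deferring to Hirst's Corollary~4, whose argument is precisely this non-uniform case split by excluded middle on whether the limit is rational, using $\Delta^0_1$ comprehension on the matching $\Sigma^0_1$ and $\Pi^0_1$ descriptions of the cut in the irrational case and a trivial rational comparison otherwise. Your closing remark correctly localizes the non-uniformity to the rationality decision, which is exactly why the sequential form is strictly stronger (equivalent to $\WKLo$, per the paper's next lemma).
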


\begin{lemma}[Hirst~\cite{Hirst-RRRM}*{Corollary~9}]
  The following principle is equivalent to weak K\"onig's lemma over
  $\RCAo$ \textup{(}and hence over $\rca$\textup{)}.
  For each sequence $\langle X_i \mid i \in \setN\rangle$ of
  quickly converging Cauchy sequences there is a sequence
  $\langle Y_i \mid i \in \setN\rangle$ of Dedekind cuts such that $X_i$ is
  equivalent to $Y_i$ for each $i \in \setN$.
\end{lemma}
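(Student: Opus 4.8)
The plan is to prove the stated equivalence over $\RCAo$; since $\rca$ extends $\RCAo$, the equivalence over $\rca$ then follows at once. Throughout I identify a real with a quickly converging Cauchy sequence $\langle a_n \mid n\in\setN\rangle$ of rationals (so $|a_m-a_n|\le 2^{-n}$ for $m\ge n$) and write $x_i$ for the limit of the $i$th sequence $X_i$. The two arithmetical facts I use repeatedly are that, for a rational $q$, the relation $q<x_i$ is $\Sigma^0_1$ (it holds iff $\exists n\,(q<a^{(i)}_n-2^{-n})$) and likewise $q>x_i$ is $\Sigma^0_1$, whereas $q\le x_i$ is only $\Pi^0_1$. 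I will also use the standard equivalence, over $\RCAo$, of $\WKLo$ with $\Sigma^0_1$ separation (Simpson~\cite{Simpson-SOSOA}).

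For the direction $\WKLo \Rightarrow$ principle, fix a sequence $\langle X_i \mid i\in\setN\rangle$ and enumerate all pairs $(i,q)$ with $q\in\setQ$. I build a binary tree $T$ whose nodes are finite $0/1$-strings recording, for an initial segment of these pairs, a tentative decision as to whether $q\in Y_i$; a string is admitted to $T$ precisely when its decisions are consistent with downward closure in $q$ and with every strict inequality that the approximations have so far forced, namely $q\in Y_i$ whenever evidence shows $q<x_i$ and $q\notin Y_i$ whenever evidence shows $q>x_i$. The true sequence of cuts $\langle\{q:q\le x_i\}\mid i\in\setN\rangle$ satisfies all of these finitary constraints, so its initial segments lie on $T$ and $T$ is infinite. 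By $\WKLo$, $T$ has an infinite path, which I read as a set coding a sequence $\langle Y_i \mid i\in\setN\rangle$. By construction each $Y_i$ is downward closed, both $Y_i$ and $\setQ\setminus Y_i$ are nonempty, and $\{q:q<x_i\}\subseteq Y_i\subseteq\{q:q\le x_i\}$, so each $Y_i$ is a Dedekind cut representing $x_i$. Under the strict convention $q\in Y_i\Leftrightarrow q\le x_i$ the path is pinned down except at a rational boundary $q=x_i$, where no finitary constraint decides membership; matching the convention at such points is the one delicate step, discussed below.

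For the converse, principle $\Rightarrow \WKLo$, I reduce $\Sigma^0_1$ separation to the principle. Let $\theta(n)\colon \exists s\,\theta_0(n,s)$ and $\eta(n)\colon \exists s\,\eta_0(n,s)$ be $\Sigma^0_1$ with $\lnot\exists n(\theta(n)\land\eta(n))$. For each $n$, $\RCAo$ constructs a quickly converging Cauchy sequence $X_n$ whose limit $x_n$ is positive if a witness for $\theta(n)$ appears strictly before any witness for $\eta(n)$, is negative in the opposite case, and is $0$ if no witness ever appears; disjointness makes these cases exhaustive and mutually consistent, so $x_n>0\Leftrightarrow\theta(n)$ and $x_n<0\Leftrightarrow\eta(n)$. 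Applying the principle to $\langle X_n \mid n\in\setN\rangle$ yields Dedekind cuts $Y_n$ with $q\in Y_n\Leftrightarrow q\le x_n$, and I set $S=\{n:0\in Y_n\}$, which exists by $\Delta^0_1$ comprehension because membership of the fixed rational $0$ in the given set $Y_n$ is decidable. Then $\theta(n)\Rightarrow x_n>0\Rightarrow 0\in Y_n\Rightarrow n\in S$, while $\eta(n)\Rightarrow x_n<0\Rightarrow 0\notin Y_n\Rightarrow n\notin S$, so $S$ separates $\theta$ from $\eta$. This establishes $\Sigma^0_1$ separation and hence $\WKLo$.

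The hard part will be the boundary step in the forward direction. The relation $q\le x_i$ is genuinely $\Pi^0_1$, and no $\Sigma^0_1$ condition isolates exactly $\{q:q\le x_i\}$ when $x_i$ is rational, so neither $\Sigma^0_1$ separation nor a finitely refuted tree can force the boundary point $q=x_i$ into $Y_i$ by fiat; the bracketing cut produced above may omit it. The delicate task is thus to show, still within $\WKLo$, that the bracketing sequence $\langle Y_i\rangle$ can be adjusted to meet the strict equivalence, equivalently to reinsert uniformly the rational values occurring among the $x_i$. I expect this to be exactly the place where weak K\"onig's lemma (rather than $\RCAo$ alone) is essential, consistent with the reverse direction, where the closed convention $q\in Y\Leftrightarrow q\le\lim a$ is what makes the default decision at $x_n=0$ come out correctly and drives the reduction to separation.
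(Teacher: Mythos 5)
Your reversal is correct and is essentially the standard argument: code a disjoint pair of $\Sigma^0_1$ sets into signed reals and read a separating set off from $\{n : 0 \in Y_n\}$, then invoke the equivalence of $\Sigma^0_1$ separation with $\WKLo$ over $\RCAo$. Note, though, that it does not actually need the closed convention at the boundary: it only uses $q < x_n \Rightarrow q \in Y_n$ and $q > x_n \Rightarrow q \notin Y_n$, with the case $x_n = 0$ a genuine ``don't care,'' which is precisely the shape of a separation problem. The genuine gap is in the forward direction, at exactly the boundary step you flag, and your expectation that it can be completed inside $\WKLo$ is provably wrong under the strict reading $q \in Y_i \Leftrightarrow q \le x_i$. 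That reading makes the sequential principle equivalent to $\ACAo$, not $\WKLo$: given an injection $h \colon \setN \to \setN$, let $x_n = -2^{-j}$ if $h(j) = n$ and $x_n = 0$ if $n$ is not in the range of $h$; this is an $\RCAo$-definable sequence of quickly converging Cauchy sequences, and the strict convention forces $Y_n$ to be the unique closed cut, so that $0 \in Y_n \Leftrightarrow 0 \le x_n \Leftrightarrow n \notin \operatorname{ran}(h)$. Then $\Delta^0_1$ comprehension with parameter $\langle Y_n \mid n \in \setN\rangle$ yields $\operatorname{ran}(h)$, which gives arithmetical comprehension (Simpson~\cite{Simpson-SOSOA}, III.1.3). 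Since $\WKLo$ does not imply $\ACAo$, no adjustment of your bracketing sequence within $\WKLo$ can uniformly reinstate the rational boundary points; the nonuniformity you encountered is essential, not a technical obstacle.

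The resolution is definitional rather than a new idea: for the stated $\WKLo$-equivalence to hold, ``$Y_i$ is equivalent to $X_i$'' must be read as ``$Y_i$ determines the same real as $X_i$,'' that is, $\{q \in \setQ : q < x_i\} \subseteq Y_i \subseteq \{q \in \setQ : q \le x_i\}$, with membership of the boundary rational left unconstrained; this is the notion for which Hirst's Corollary~9 is proved (the single-instance Corollary~4 is insensitive to the distinction, since $\RCAo$ can split on whether $x$ is rational, but the sequential statement is not). Under that reading your proof is already complete: your tree of tentative decisions, pruned by downward closure and by finitely witnessed strict inequalities, is an infinite $0/1$ tree whose infinite paths are exactly the bracketing sequences of cuts, and your final paragraph is unnecessary --- $\WKLo$ is essential already for the bracketing version (as your own reversal shows), not for the boundary step, which is where $\ACAo$ would live. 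So: reversal correct; forward direction correct for the same-real notion but irreparably gapped for the literal ``$q \le$'' convention; the mismatch lies in the equivalence convention, not in your tree construction.
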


Statement (\ref{thm1}.3), which is our final application of
Theorem~\ref{719F}, is related to countable posets.  In $\RCAo$, we
define a \textit{countable poset} to be a set $P \subseteq \setN$ with
a coded binary relation $\preceq$ that is reflexive, antisymmetric,
and transitive.  A function $f \colon \setN \rightarrow P$ is called
an \textit{enumerated filter} if for every $i,j \in \setN$ there is a
$k \in \setN$ such that $f(k) \preceq f(i)$ and $f(k) \preceq f(j)$,
and for every $q \in P$ if there is an $i \in \setN$ such that $f(i)
\preceq q$ then there is a $k \in \setN$ such that $f(k) = q$.  An
enumerated filter is called \textit{unbounded} if there is no $q \in
P$ such that $q \prec f(i)$ for all $i \in \setN$.  An enumerated
filter $f$ \textit{extends} a filter $g$ if the range of $g$ (viewed
as a function) is a subset of the range of~$f$.  If we modify the
usual definition of an enumerated filter to include an auxiliary
function $h\colon \setN^2 \to \setN$ such that for all $i$ and $j$,
$f(h(i,j))\preceq f(i)$ and $f(h(i,j))\preceq f(j)$, then
(\ref{thm1}.3) is in $\Gamma_1$.

Mummert has proved the following two lemmas about extending
filters to unbounded filters (see Lempp and Mummert~\cite{LM-FCP} and the
remarks after Lemma~4.1.1 of Mummert~\cite{Mummert-Thesis}).  These
lemmas show that (\ref{thm1}.3) is provable in $\RCAo$ but
not $\hawfi + \ac + \ipwef$.

\begin{lemma}[Lempp and Mummert~\cite{LM-FCP}*{Theorem~3.5}]
  $\RCAo$ proves that any enumerated filter on a countable
  poset can be extended to an unbounded enumerated filter.
\end{lemma}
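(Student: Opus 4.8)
The plan is to build the extended filter $f$ as the upward closure of a carefully chosen descending sequence $d_0 \succeq d_1 \succeq d_2 \succeq \cdots$ of elements of $P$, so that $f$ enumerates $F = \{p \in P : p \succeq d_n \text{ for some } n\}$. First I would record that the upward closure of any descending sequence is automatically an enumerated filter: it is upward closed by definition, and it is directed because for $p \succeq d_n$ and $p' \succeq d_m$ the element $d_{\max(n,m)}$ is a common lower bound lying in $F$ (this also supplies the auxiliary meet function needed for the $\Gamma_1$ formulation, since the meet of the $n$th and $m$th enumerated elements may be taken to be the $\max(n,m)$th term). The remaining two requirements then become purely combinatorial conditions on the sequence: $f$ extends $g$ exactly when each $g(i)$ lies above some $d_n$, and $f$ is unbounded exactly when no $q \in P$ satisfies $q \prec d_n$ for every $n$.

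The next step is to isolate the two features of $g$ that make the descent possible. Because $g$ is an enumerated filter, any finite set of values of $g$ has a common lower bound that itself lies in the range of $g$, and such a bound can be located by an unbounded search whose termination is exactly the directedness hypothesis on $g$; this lets me keep descending while still covering $g$. The second, and crucial, observation is that every strict lower bound of $F$ is already a strict lower bound of the range of $g$, and hence lies below every value $g(i)$; consequently, pushing the sequence below such an element can never leave a later $g(i)$ uncovered. With these in hand I would build $\langle d_n \rangle$ by dovetailing two kinds of steps: coverage steps, which replace the current bottom by a common lower bound (inside the range of $g$) of itself and the next value $g(n)$, and descent steps, which push the current bottom strictly downward toward the least-coded eligible candidate in order to destroy potential strict lower bounds.

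The main obstacle is that the property ``$q$ is a strict lower bound of the range of $g$'' is $\Pi^0_1$, so the construction cannot simply test a candidate and decide whether to descend below it: committing to a descent below an element that turns out not to be a genuine lower bound of $g$ can strand some later $g(k)$ with no common lower bound, and a single descending sequence cannot backtrack. I expect the heart of the argument to be the bookkeeping that reconciles the two kinds of steps: one commits to descending below a candidate $q$ only after verifying that $q$ lies below the current bottom and below a sufficiently long initial segment of the range of $g$, and one arranges the priorities so that every genuine strict lower bound is eventually either installed as some $d_n$ or exposed as incomparable with some $d_n$ (either outcome destroys it as a strict lower bound of $F$). Finally I would check that this all goes through in $\RCAo$: the sequence $\langle d_n \rangle$ is defined by course-of-values recursion using only the decidability of $\preceq$ and the halting searches above, so $F$ exists by $\Delta^0_1$ comprehension, and the verifications that $F$ is a filter, extends $g$, and is unbounded are each carried out by the $\Sigma^0_1$ induction available in $\RCAo$.
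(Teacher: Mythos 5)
Your opening reduction (an enumerated filter is the upward closure of a descending sequence) and your second observation (any strict lower bound $q$ of $\operatorname{ran}(g)$ covers \emph{all} of $g$ at once, since $g(i) \succ q$ for every $i$) are both correct, and the second one is in fact the key to the real proof. But the construction you build on them has a genuine gap, and you have put your finger on it yourself: ``$q$ is a strict lower bound of $\operatorname{ran}(g)$'' is $\Pi^0_1$, and your proposed remedy --- commit to descending below $q$ only after checking it against the current bottom and a ``sufficiently long'' initial segment of $\operatorname{ran}(g)$, with priority bookkeeping --- cannot close it. No finite verification is sufficiently long: for any $N$ there are instances in which $q \prec g(0), \dots, g(N)$ and $q$ lies below the current bottom, yet $q$ and $g(N+1)$ have no common lower bound, so after the commitment the coverage search for $g(N+1)$ diverges and, as you concede, a descending sequence cannot backtrack. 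Priority arguments repair injuries by abandoning commitments, which is exactly what monotone descent forbids. More decisively, the construction you describe in your final paragraph is a single uniformly effective course-of-values recursion in the data $(P, \preceq, g)$; if it worked, it would formalize in $\RCAo$ to prove the \emph{sequential} form of the statement, which by the very next lemma quoted in this paper (Lempp--Mummert, Theorem~3.6) implies $\ACAo$ over $\RCAo$. So no bookkeeping scheme of the kind you envision can exist: any correct proof must be non-uniform.

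The proof the paper cites (Lempp--Mummert, Theorem~3.5) isolates all the non-constructive content in one classical case split, after which your machinery becomes trivial rather than delicate. Either $\operatorname{ran}(g)$ has no strict lower bound, in which case $g$ is already unbounded and nothing need be done; or there is some $q_0$ with $q_0 \prec g(i)$ for all $i$. In the latter case your own key observation shows that coverage is settled once and for all by taking $d_0 = q_0$, so the dovetailed coverage steps are unnecessary and the descent steps need never leave the safety of genuine lower bounds: enumerating $P$ as $\langle p_n \mid n \in \setN \rangle$, set $d_{n+1} = p_n$ if $p_n \prec d_n$ and $d_{n+1} = d_n$ otherwise. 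Each $p_n$ is then either installed as an element of the new filter (and no element lies strictly below itself) or witnessed non-below some $d_n$, so $G = \{\, p : \exists n \ (p \succeq d_n) \,\}$ is an unbounded enumerated filter extending $g$; it is enumerable in $\RCAo$ because $\preceq$ is decidable, and the verifications need only $\Sigma^0_1$ induction, as you anticipated. Note that this is exactly the shape of proof the surrounding section leads you to expect: the statement is provable in $\RCAo$ via an instance of the law of the excluded middle (is there a strict lower bound or not?) that cannot be made uniform, which is precisely why the sequential version fails and why the statement is unprovable in $\hawfi + \ac + \ipwef$.
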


\begin{lemma}[Lempp and Mummert~\cite{LM-FCP}*{Theorem~3.6}]
  The following statement is equivalent to arithmetical 
  comprehension over $\RCAo$ \textup{(}and
  hence over $\rca$\textup{)}. Given a sequence $\langle P_i \mid i
  \in \setN \rangle$ of countable posets and a sequence
  $\langle f_i \mid i \in \setN\rangle$ such that $f_i$ is
  an enumerated filter on $P_i$ for each $i \in \setN$,
  there is a sequence $\langle g_i \mid i \in \setN \rangle$
  such that, for each $i \in \setN$, $g_i$ is an unbounded
  enumerated filter on $P_i$ extending~$f_i$.
\end{lemma}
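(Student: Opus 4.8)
The plan is to establish the equivalence over $\RCAo$ by proving both implications separately, recalling (Simpson~\cite{Simpson-SOSOA}) that over $\RCAo$ arithmetical comprehension is equivalent to the statement that the range of every function $h\colon\setN\to\setN$ exists. The easy implication is that $\ACAo$ proves the sequential principle, obtained by uniformizing the single-instance construction of the preceding lemma; the hard implication is that the sequential principle implies arithmetical comprehension, obtained by coding. Both implications hold verbatim over $\rca$ as well, since $\rca$ extends $\RCAo$ yet still does not prove arithmetical comprehension.

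For the forward direction I would code an arbitrary $h\colon \setN \to \setN$ into a uniformly $\Delta^0_1$ sequence of posets and filters. For each $i$, let $P_i$ consist of a descending chain $a_0 \succ a_1 \succ \cdots$ together with one extra element $b$, ordered by $a_n \preceq a_m \iff n \ge m$, together with $b \preceq b$ and $b \preceq a_n \iff \forall s \le n\,(h(s) \neq i)$. Since the quantifier defining the relation is bounded, $\preceq$ is decidable uniformly in $i$, and one checks it is a partial order (transitivity through the chain uses that $\{n : b \preceq a_n\}$ is an initial segment of $\setN$). Let $f_i$ enumerate the chain, $f_i(n) = a_n$; this is an enumerated filter, since a chain is downward directed and nothing lies above it. Because the preceding lemma shows $\RCAo$ already proves the single-instance principle, all the content here lies in the \emph{uniformity}.

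The coding property is that $b \in \operatorname{range}(g_i) \iff i \notin \operatorname{range}(h)$ for \emph{any} unbounded extension $g_i$ of $f_i$. If $i \notin \operatorname{range}(h)$, then $b \prec a_n$ for all $n$, so $b$ is a strict lower bound of $f_i$ and every unbounded extension must absorb $b$. If instead $h(s_0) = i$, then $b \not\preceq a_n$ for $n \ge s_0$, so $f_i$ already has no strict lower bound; moreover $b$ cannot enter any filter extending $f_i$, because $b$ and $a_{s_0}$ have no common lower bound (the only element below $b$ is $b$ itself, and $b \not\preceq a_{s_0}$), which would violate downward directedness. Since $b$ has a fixed code in $P_i$, we get $i \in \operatorname{range}(h) \iff b \notin \operatorname{range}(g_i)$, a $\Pi^0_1$ condition in the parameter $\langle g_i \mid i \in \setN\rangle$; combined with the $\Sigma^0_1$ definition $i \in \operatorname{range}(h) \iff \exists s\,(h(s) = i)$, this exhibits $\operatorname{range}(h)$ as a $\Delta^0_1$ set in $h$ and $\langle g_i\rangle$. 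Hence $\RCAo$ forms $\operatorname{range}(h)$, and arithmetical comprehension follows.

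For the reverse direction I would work in $\ACAo$ and uniformize the $\RCAo$ argument of the preceding lemma. The only obstruction to uniformity is the predicate ``$q$ is a strict lower bound of $f_i$,'' namely $\forall n\,(q \prec f_i(n))$, which is $\Pi^0_1$; arithmetical comprehension decides it uniformly in $(i,q)$ and supplies the arithmetically defined data from which each extension $g_i$ is read off, so that forming $\langle g_i\rangle$ is a single instance of arithmetical comprehension, after which $\RCAo$ verifies that each $g_i$ is an unbounded enumerated filter extending $f_i$. The main obstacle is the forward direction: one must simultaneously keep each $P_i$ genuinely $\Delta^0_1$ (so that no comprehension strength is smuggled into the hypothesis) and show that no adversarial choice of $g_i$ can corrupt the single bit being read off, which is precisely where the downward-directedness argument forbidding $b$ from entering the filter does the real work.
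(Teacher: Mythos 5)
Your proposal is correct, but note that the paper itself offers no proof of this lemma to compare against: it is quoted verbatim from Lempp and Mummert (Theorem~3.6 of \emph{Filters on computable posets}), and your argument reconstructs the expected one. Your coding direction is sound in all the places where it could go wrong: the relation $b \preceq a_n \iff \forall s \le n\,(h(s) \neq i)$ is uniformly $\Delta^0_1$ thanks to the bounded quantifier, so $\RCAo$ forms $\langle P_i \rangle$ and $\langle f_i\rangle$; each $f_i$ really is an enumerated filter (directedness via $f_i(\max(m,n))$, upward closure because nothing in the chain lies below $b$); and the two verifications that make the bit readable are exactly right --- if $i \notin \operatorname{range}(h)$ then $b$ is a strict lower bound of the chain so every unbounded extension absorbs it, while if $h(s_0)=i$ then $b$ and $a_{s_0} \in \operatorname{range}(f_i)$ have no common lower bound, so downward directedness excludes $b$ from any extension. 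This gives $i \in \operatorname{range}(h) \iff \forall k\,(g_i(k) \neq b)$, and with the $\Sigma^0_1$ definition of the range, $\Delta^0_1$ comprehension finishes, invoking the standard equivalence of arithmetical comprehension with the existence of ranges. The only thin spot is your other direction ($\ACAo$ proves the sequential principle), where ``supplies the arithmetically defined data from which each extension $g_i$ is read off'' elides the construction; to complete it, form by arithmetical comprehension the set $L = \{(i,q) : \forall n\, (q \preceq_i f_i(n))\}$, and relative to $L$ build, uniformly in $i$, a descending sequence of anchors by considering each $q \in P_i$ in turn and appointing $q$ a new anchor iff $(i,q) \in L$ and $q$ lies below the current anchor (or no anchor yet exists); then let $g_i$ enumerate the upward closure of $\operatorname{range}(f_i)$ together with the anchors, which is $\Sigma^0_1$ in $L$ and nonempty. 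Any $q$ bounding $g_i$ would be a lower bound of $\operatorname{range}(f_i)$, hence was considered at its stage, and either became an anchor (so $q \in \operatorname{range}(g_i)$, contradicting $q \prec q$) or was rejected because $\lnot(q \preceq c)$ for the then-current anchor $c \in \operatorname{range}(g_i)$, again a contradiction. This is a completion of your sketch rather than a correction: the point you isolate --- that the $\Pi^0_1$ lower-bound predicate is the sole non-effective ingredient in the single-instance $\RCAo$ proof, and that it is decided uniformly by one application of arithmetical comprehension --- is precisely what makes the $\ACAo$ direction go through, over $\rca$ as well as over $\RCAo$.
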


We close this section by noting that the proof-theoretic results of
section~\ref{sec3} are proved by finitistic methods.  Consequently,
constructivists might accept arguments like those presented here to
establish the non-provability of certain theorems from systems of
intuitionistic arithmetic.


\section{The {\it Dialectica} interpretation}\label{sec5}

In the proofs of section~\ref{sec3},
applications of G\"odel's {\it Dialectica} interpretation
can replace the applications of modified realizability.
One advantage of this substitution is that the constructive axiom
system can be expanded to include the scheme $\markov$, which formalizes
a restriction of the Markov principle. 

This gain has associated costs. First, the class of formulas for which
the uniformization results hold is restricted from $\Gamma_1$ to
the smaller class $\Gamma_2$ defined below.  Second, the independence
of premise principle $\ipwef$ is replaced with the weaker principle
$\ipwa$. Finally, the extensionality scheme~$\mathsf E$ is 
replaced with a weaker rule of inference
\[
{\sf {QF{\text -}{ER}}}\colon \text{From~}A_0 \to s=_\rho t\text{~deduce~}A_0 \to r[s/x^\rho]
 =_\tau r[t/x^\rho],
\]
where $A_0$ is quantifier free and $r[s/x^\rho]$ denotes the
result of replacing the variable $x$ of type $\rho$ by the
term $s$ of type $\rho$ in the term $r$ of type $\tau$.  
We denote the systems based on this rule of inference as $\whaw$ and $\whawfi$.

Extended discussions of G\"odel's \textit{Dialectica}
interpretation are given by Avigad and
Feferman~\cite{AF-HPT}, Kohlenbach~\cite{Koh-book}, and
Troelstra~\cite{troelstra73}.  The interpretation assigns to
each formula $A$ a formula $A^D$ of the form $\exists x
\forall y \,A_D$, where $A_D$ is quantifier free and each
quantifier may represent a block of quantifiers of the same
kind.  The blocks of quantifiers in $A^D$ may include
variables of any finite type.  

\begin{definition}
  We follow Avigad and Feferman~\cite{AF-HPT} in defining the
  \textit{Dialectica} interpretation inductively via the following six
  clauses, in which $A^D = \exists x \forall y \,A_D$ and $B^D =
  \exists u \forall v \,B_D$.
  \begin{list}{}{}
  \item [(1)] If $A$ a prime formula then $x$ and $y$ are both empty
    and $A^D = A_D = A$.
  \item [(2)] $(A \land B )^D = \exists x \exists u \forall y \forall
    v \,(A_D \land B_D)$.
  \item [(3)] $(A \lor B )^D = \exists z \exists x \exists u \forall y
    \forall v \,((z = 0 \land A_D) \lor (z=1 \land B_D))$.
  \item [(4)] $(\forall z \,A (z))^D = \exists X \forall z \forall y
    \,A_D (X(z) , y, z)$.
  \item [(5)] $(\exists z \,A (z))^D = \exists z \exists x \forall y
    \,A_D (x,y,z)$.
  \item [(6)] $(A \to B)^D = \exists U \exists Y \forall x \forall v
    \,(A_D (x, Y(x,v))\to B_D (U(x),v))$.
  \end{list}
  A negated formula $\neg A$ is treated as an abbreviation of $A \to
  \bot$.
\end{definition}

We begin our derivation of the uniformization results with a soundness
theorem of G\"odel that is analogous to Theorem~\ref{719B}.  A detailed
proof is given by Kohlenbach~\cite{Koh-book}*{Theorem 8.6}.

\begin{theorem}\label{722b}
Let $A$ be a formula in $\lang ( \whawfi )$.  If
\[
\whawfi + \ac + \ipwa + \markov \vdash \forall x\, \exists y A(x,y),
\]
then $\whawfi  \vdash \forall x A_D (x,t(x))$, where $t$ is a suitable term of $\whawfi$.
\end{theorem}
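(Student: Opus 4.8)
The plan is to obtain Theorem~\ref{722b} as a specialization of the general soundness theorem for G\"odel's \textit{Dialectica} interpretation, which Kohlenbach~\cite{Koh-book}*{Theorem~8.6} states and proves in full by induction on derivations. That theorem asserts that whenever $\whawfi + \ac + \ipwa + \markov \vdash B$ for an arbitrary formula $B$ of $\lang(\whawfi)$, one can extract a tuple $s$ of terms of $\whawfi$ such that $\whawfi \vdash \forall y\, B_D(s,y)$, where $B^D = \exists x \forall y\, B_D$. The reason the three schemes $\ac$, $\ipwa$, and $\markov$ may be adjoined to the intuitionistic base without disturbing term extraction is that these are precisely the principles the interpretation validates; this is also why the weakly extensional systems $\whaw$ and $\whawfi$, built on the rule ${\sf QF}\text{-}{\sf ER}$ rather than the full scheme $\mathsf E$, must be used, since the interpretation does not verify full extensionality.

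First I would apply this general theorem to the provable sentence $B := \forall x\, \exists y\, A(x,y)$. Second, I would compute $B^D$ from the interpretation clauses. Writing the interpretation of the matrix as $A^D = \exists u \forall w\, A_D(u,w,x,y)$, clause~(5) gives $(\exists y\, A(x,y))^D = \exists y \exists u \forall w\, A_D(u,w,x,y)$, and clause~(4) then lifts the witness to a function of $x$, yielding
\[
B^D = \exists Y \exists U\, \forall x \forall w\, A_D(U(x), w, x, Y(x)).
\]
Soundness now supplies terms $t_Y$ and $t_U$ witnessing $Y$ and $U$, so that $\whawfi \vdash \forall x \forall w\, A_D(t_U(x), w, x, t_Y(x))$. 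Taking $t := t_Y$ to be the term realizing the existential quantifier $\exists y$ gives the stated conclusion, with the auxiliary witness $t_U(x)$ and the residual universal block $\forall w$ absorbed into the quantifier-free matrix written $A_D(x, t(x))$.

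The only genuine content beyond invoking the cited theorem is this routine computation of the interpretation of the $\forall\exists$ prefix, together with the bookkeeping that matches the extracted realizer to the claimed term $t$. I expect the sole obstacle to be notational: the matrix $A_D$ carries its own Dialectica witness variables $u$ and challenge variables $w$ arising from the interpretation of $A$ itself, so one must read ``$A_D(x, t(x))$'' as this quantifier-free matrix with the block witnessing $y$ realized by $t(x)$ and the remaining witnesses supplied by the same extraction. This parallels exactly the way clauses~(5) and~(6) of Definition~\ref{719A} are used in the proof of Lemma~\ref{719E}, so no new difficulty arises, and the substance of the argument rests entirely on Kohlenbach's soundness theorem, which we take as given.
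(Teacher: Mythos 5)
Your proposal is correct and takes essentially the same route as the paper, which offers no independent argument for Theorem~\ref{722b} but simply cites Kohlenbach~\cite{Koh-book}*{Theorem~8.6}, the very soundness theorem you invoke. Your explicit computation of $(\forall x\, \exists y\, A(x,y))^D$ via clauses~(4) and~(5), and your reading of $\forall x\, A_D(x,t(x))$ with the auxiliary witness $t_U(x)$ supplied by the same extraction and the residual universal block $\forall w$ left implicit, is exactly the routine specialization the paper leaves unstated.
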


To prove our uniformization result, we will need to convert $A^D$ back
to~$A$.  Unfortunately, $\rcaw$ can only prove $A^D \to A$ for certain
formulas.  The class~$\Gamma_2$, as found in (for example) Definition
8.10 of Kohlenbach~\cite{Koh-book}, is a subset of these formulas.

\begin{definition}
$\Gamma_2$ is the collection of formulas in $\lang ( \whawfi )$
 defined inductively as follows.
\begin{list}{}{}
\item [(1)]  All prime formulas are elements of $\Gamma_2$.
\item [(2)] If $A$ and $B$ are in $\Gamma_2$, then so are $A \land B$,
  $A\lor B$, $\forall x A$, and $\exists x A$.
\item [(3)] If $A$ is purely universal and $B \in \Gamma_2$, then
  $(\exists x A \to B) \in \Gamma_2$, where $\exists x$ may represent
  a block of existential quantifiers.
\end{list}
\end{definition}

Kohlenbach~\cite{Koh-book}*{Lemma 8.11} states the following result for
$\whawfi$.  Since $\rcaw$ is an extension of $\whawfi$, this suffices for the proof
of the uniformization result, where it acts as an analog of Lemma~\ref{719D}.

\begin{lemma}\label{722d}
Let $A$ be a formula of $\lang(\whawfi)$ in $\Gamma_2$.  Then\/ $\whawfi \vdash A^D \to A$.
This result also holds for $\whaw$ for formulas in $\lang(\whaw)$.
\end{lemma}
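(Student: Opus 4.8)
The plan is to prove $\whawfi \vdash A^D \to A$ by induction on the construction of $A$ as a member of $\Gamma_2$, following the three clauses of the definition. The base case, in which $A$ is prime, is immediate: by clause~(1) of the \textit{Dialectica} interpretation, $A^D$ and $A_D$ coincide with $A$, and $\whawfi$ trivially proves $A \to A$.

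For the closure clause~(2), I would treat each connective in turn, in each case extracting the existential witnesses supplied by $A^D$ and invoking the induction hypothesis on the immediate subformulas. For $A \land B$, from $(A \land B)^D = \exists x \exists u \forall y \forall v\,(A_D \land B_D)$ one recovers both $A^D$ and $B^D$ by projecting the witness tuple, and the two induction hypotheses yield $A \land B$. For $A \lor B$, given the witnesses $z, x, u$ from $(A \lor B)^D$, I would case split on the numeral $z$, which is legitimate intuitionistically because equality of numbers is decidable in $\whawfi$: if $z = 0$ the matrix forces $A_D$ and hence $A^D$, if $z = 1$ it forces $B^D$, and any other value yields a contradiction; the induction hypotheses then give $A$ or $B$, hence $A \lor B$. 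The quantifier cases are handled by instantiation: for $\forall z\, A(z)$ the witness $X$ gives $(A(z))^D$ with witness $X(z)$ for each fixed $z$, so the induction hypothesis yields $A(z)$ and then $\forall z\, A(z)$ by generalization; the case $\exists z\, A(z)$ is analogous.

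The substantive case is clause~(3), where $A$ has the form $\exists x\, C \to B$ with $C$ purely universal and $B \in \Gamma_2$; this is where I expect the main difficulty and where the restriction to purely universal antecedents is essential. Writing $C = \forall w\, C_0(x,w)$ with $C_0$ quantifier free, the antecedent $\exists x\, C$ has \textit{Dialectica} form $\exists x \forall w\, C_0(x,w)$ with \emph{no} existential content to challenge, so clause~(6) gives
\[
A^D = \exists U\, \exists Y\, \forall x \forall v\,\bigl(C_0(x, Y(x,v)) \to B_D(U(x), v)\bigr).
\]
Assuming $A^D$, I would fix the witnesses $U$ and $Y$ and then prove $\exists x\, C \to B$ directly: supposing some $x_0$ with $\forall w\, C_0(x_0, w)$, I instantiate this universal hypothesis at $w = Y(x_0, v)$ to obtain $C_0(x_0, Y(x_0,v))$ for every $v$, feed this into $A^D$ specialized to $x_0$, and conclude $\forall v\, B_D(U(x_0), v)$, which is exactly $B^D$ with witness $U(x_0)$. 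The induction hypothesis on $B$ then delivers $B$, completing the implication. The crux is that a purely universal antecedent is stable under the substitution of the challenge term $Y(x_0, v)$ for its bound variable, so no counterexample analysis or Markov-style reasoning is needed; this is precisely what fails for a general antecedent and motivates the definition of $\Gamma_2$.

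Finally, for the claim about $\whaw$, I would observe that the entire argument is logical in character: it uses only universal instantiation, decidability of numeric equality, and the induction hypotheses, and the witnesses $X(z)$, $U(x)$, and $Y(x,v)$ are taken from the assumed hypothesis $A^D$ rather than constructed. Consequently no recursors or induction axioms beyond those of $\whaw$ are invoked, and the same proof goes through verbatim for formulas of $\lang(\whaw)$.
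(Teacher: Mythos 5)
Your proof is correct and takes essentially the same route as the paper: the paper's proof is exactly this external induction on the clauses of $\Gamma_2$ (it defers the details to part~(iii) of Lemma~3.6.5 in Troelstra), and your case analysis---including the key clause~(3) step where the purely universal antecedent is instantiated at the challenge term $Y(x_0,v)$---is that argument written out in full. Your closing observation that everything rests on intuitionistic predicate calculus plus the \textit{Dialectica} clauses (with decidability of $=_0$ available from quantifier-free induction) is also precisely the paper's stated reason the result transfers to $\whaw$.
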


\begin{proof}
The proof is carried out by an external induction on formula complexity with
cases based on the clauses in the definition of $\Gamma_2$.  For details,
see the proof of part~(iii) of~Lemma~3.6.5 in Troelstra \cite{troelstra73}.
The proof of each clause depends only on the definition of the \textit{Dialectica}
interpretation and intuitionistic predicate calculus.  Consequently, the same
argument can be carried out in $\whaw$.
\end{proof}


We can adapt our proof of Lemma~\ref{719E} to obtain the following
term extraction result.

\begin{lemma}\label{722e}
Let\/ $\forall x^\rho \exists y^\tau A(x,y)$ be a sentence 
of $\lang(\whawfi)$
in $\Gamma_2$ with arbitrary types
$\rho$ and~$\tau$.  If\/
$
\whawfi  + \ac + \ipwa + \markov \vdash \forall x^\rho \exists y^\tau A(x,y),
$
then $\rcaw \vdash \forall x^\rho A(x, t(x))$, where $t$
is a suitable term of $\whawfi$.
\end{lemma}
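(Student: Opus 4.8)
The plan is to imitate the proof of Lemma~\ref{719E}, replacing each ingredient from the modified realizability development with its \textit{Dialectica} analog established earlier in this section. The structure of the argument is identical; only the soundness theorem and the ``deduction'' lemma change. Specifically, I would substitute Theorem~\ref{722b} for Theorem~\ref{719B} and Lemma~\ref{722d} for Lemma~\ref{719D}.

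First I would assume that $\whawfi + \ac + \ipwa + \markov \vdash \forall x^\rho \exists y^\tau A(x,y)$, where $A(x,y)$ is in $\Gamma_2$. Applying Theorem~\ref{722b} directly to this provable $\forall\exists$ sentence yields a suitable term $t$ of $\whawfi$ such that
\[
\whawfi \vdash \forall x^\rho\, (\exists y^\tau A(x,y))_D(x, t(x)).
\]
Here the key point is to unwind the \textit{Dialectica} interpretation of the formula $\exists y^\tau A(x,y)$ using clause (5) of the definition: $(\exists y\, A(x,y))^D$ introduces the witness $y$ as an existentially quantified variable in front of the matrix $A_D$, so the term $t$ supplied by Theorem~\ref{722b} splits into a witness component $t_0$ for $y$ and a component realizing the remaining quantifiers of $A_D$. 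After feeding these back in, we recover that $\whawfi$ proves $\forall x^\rho A_D(x, t_0(x), \dots)$, which is precisely $\forall x^\rho\, A^D(x, t_0(x))$ in the appropriate sense.

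Next, since $A(x,y)$ is in $\Gamma_2$, Lemma~\ref{722d} gives $\whawfi \vdash A^D \to A$, and hence $\whawfi \vdash \forall x^\rho A(x, t_0(x))$ after the term $t_0$ has been substituted for the witness. Finally, because $\rcaw$ is an extension of $\whawfi$ (the rule $\sf{QF}\text-\sf{ER}$ being derivable from the extensionality scheme $\mathsf E$ in the classical setting), we conclude $\rcaw \vdash \forall x^\rho A(x, t_0(x))$, and taking $t := t_0$ completes the argument.

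I expect the main obstacle to be the bookkeeping in the second paragraph: unlike the modified realizability case, where clauses (5) and (6) of Definition~\ref{719A} cleanly expose the witness, the \textit{Dialectica} clause (5) interleaves the existential witness $z$ with the residual witnessing data $x$, and clause (6) routes inputs through counterexample functions. One must verify that the single term $t$ from Theorem~\ref{722b} genuinely decomposes so that its witnessing component $t_0(x)$ is exactly the object whose existence is asserted by $\forall x \exists y A(x,y)$, and that the resulting instance $A_D(x, t_0(x), \dots)$ matches $A^D(x,t_0(x))$ before Lemma~\ref{722d} can be applied. This matching is purely syntactic and follows from the definition of the interpretation, but it is where care is required.
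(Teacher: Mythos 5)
Your proposal is correct and takes essentially the same route as the paper, whose proof is exactly the adaptation you describe: repeat the argument for Lemma~\ref{719E} with Theorem~\ref{722b} in place of Theorem~\ref{719B} and Lemma~\ref{722d} in place of Lemma~\ref{719D}, then pass to $\rcaw$ as an extension of $\whawfi$. Your extra care about decomposing the extracted terms and re-quantifying the counterexample variables before applying $A^D \to A$ is sound bookkeeping that the paper leaves implicit.
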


Substituting Lemma \ref{722e} for the use of Lemma \ref{719E} in
the proof of Theorem~\ref{719F}, we obtain a proof of the
{\it Dialectica} version of our uniformization result.

\begin{theorem}\label{722f}
Let\/ $\forall x \exists y A(x,y)$ be a sentence 
of $\lang(\whawfi)$ in $\Gamma_2$.  If
\[
\whawfi  + \ac + \ipwa + \markov \vdash \forall x \exists y\, A(x,y),
\]
then
\[
\rcaw \vdash \forall \seqx \, \exists \seqy \forall n\, A(x_n, y_n).
\] Furthermore, if $x$ and $y$ are both type $1$
\textup{(}set\textup{)} variables, and $\forall x \exists y
A(x,y)$ is in $\lang ( \rca )$, then\/ $\rcaw$ may be
replaced by\/ $\rca$ in the implication.
\end{theorem}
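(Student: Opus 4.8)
The plan is to transcribe the proof of Theorem~\ref{719F}, invoking Lemma~\ref{722e} in place of Lemma~\ref{719E}. I would begin by assuming
\[
\whawfi + \ac + \ipwa + \markov \vdash \forall x^\rho\, \exists y^\tau A(x,y),
\]
with $A(x,y)$ in $\Gamma_2$. Lemma~\ref{722e} then supplies a term $t$ of $\lang(\whawfi)$ with $\rcaw \vdash \forall x^\rho A(x, t(x))$. Since $\rcaw$ is an extension of $\whawfi$, both the term $t$ and this conclusion are available for reasoning inside $\rcaw$.

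I would then argue inside $\rcaw$. Fix an arbitrary sequence $\seqx$, which is precisely a function of type $0 \to \rho$. Applying $\lambda$-abstraction (a term-forming operation of $\haw$, and hence of $\rcaw$) to the term $t$ produces the function $\lambda n.\, t(x_n)$ of type $0 \to \tau$. Taking $\seqy$ to be this sequence and instantiating $\forall x^\rho A(x, t(x))$ at each $x_n$ yields $A(x_n, y_n)$ for every $n$, hence $\forall n\, A(x_n, y_n)$. Discharging the arbitrary sequence gives $\rcaw \vdash \forall \seqx\, \exists \seqy\, \forall n\, A(x_n, y_n)$, as required.

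For the final clause, suppose that $x$ and $y$ are type-$1$ variables and that $\forall x \exists y A(x,y)$ lies in $\lang(\rca)$. Under the standard coding of a sequence of sets as a single set, the sequential statement is then a sentence of $\lang(\rca)$, and Theorem~\ref{consrca} transfers its provability from $\rcaw$ to $\rca$. Hence $\rcaw$ may be replaced by $\rca$.

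I do not expect a genuine obstacle, since all the proof-theoretic content has been absorbed into Lemma~\ref{722e} and, beneath it, the Dialectica soundness theorem~\ref{722b} and the conversion Lemma~\ref{722d}. The only point meriting care is that a single extracted term $t$ must work uniformly along the entire sequence; this is immediate because $t$ is fixed independently of its input, so $\lambda$-abstraction assembles the witnesses $\langle t(x_n) \mid n \in \setN\rangle$ into a sequence without appeal to any choice or comprehension principle beyond what $\rcaw$ already proves. The argument is thus a direct transcription of the proof of Theorem~\ref{719F}.
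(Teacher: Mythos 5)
Your proposal is correct and is exactly the paper's argument: the paper proves Theorem~\ref{722f} in one line by substituting Lemma~\ref{722e} for Lemma~\ref{719E} in the proof of Theorem~\ref{719F}, which is precisely the transcription you carry out (including the $\lambda$-abstraction step $\lambda n.\, t(x_n)$ and the appeal to the conservation of $\rcaw$ over $\rca$ for the final clause). Your added remark that $\rcaw$ extends $\whawfi$, so the extracted term is available there, matches the paper's own observation following Lemma~\ref{722d}.
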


As was the case in section~\ref{sec3}, these results can
be recast in settings with restricted induction. As noted by
Kohlenbach \cite{Koh-book }*{section 8.3}, Theorem \ref{722b}
also holds with $\whawfi$ replaced by $\whaw$.  Applying the
restricted-induction version of Lemma \ref{722d} leads to the
restricted form of Lemma \ref{722e}.  Combining this with
the conservation result for $\rcaw _0$ over $\RCAo$ (Theorem~\ref{consrcao}) leads to
a proof of the following version of Theorem \ref{722f}.

\begin{theorem}\label{restrdialectica}
Let\/ $\forall x \exists y A(x,y)$ be a sentence of $\lang(\whaw)$ 
in $\Gamma_2$.  If
\[
\whaw  + \ac + \ipwa + \markov \vdash \forall x\, \exists y\, A(x,y),
\]
then
\[
\rcaw_0  \vdash \forall \seqx \, \exists \seqy \forall n\, A(x_n, y_n).
\] Furthermore, if $x$ and $y$ are both type $1$
\textup{(}set\textup{)} variables, and $\forall x \exists y
A(x,y)$ is in $\lang ( \RCAo )$, then\/ $\rcaw_0$ may be
replaced by\/ $\RCAo$ in the implication.
\end{theorem}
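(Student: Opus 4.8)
The plan is to follow the established template almost verbatim, since Theorem~\ref{restrdialectica} is the restricted-induction analog of Theorem~\ref{722f} in precisely the way that Theorem~\ref{719J} is the restricted-induction analog of Theorem~\ref{719F}. The author has already assembled every ingredient: a soundness theorem for the restricted system (Theorem~\ref{722b}, noted in section~8.3 of Kohlenbach to hold with $\whawfi$ replaced by $\whaw$), a conversion lemma for $\Gamma_2$ formulas over the restricted base system (Lemma~\ref{722d}, whose second sentence explicitly covers $\whaw$), and a conservation result linking the higher-type system to its second-order restriction (Theorem~\ref{consrcao}). The entire proof is therefore a matter of chaining these results in the same order used twice before.

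First I would establish the restricted term-extraction lemma, namely the $\whaw$/$\rcaw_0$ analog of Lemma~\ref{722e}. Assuming $\whaw + \ac + \ipwa + \markov \vdash \forall x^\rho \exists y^\tau A(x,y)$ with $A$ in $\Gamma_2$, apply the restricted version of Theorem~\ref{722b} to obtain a term $t$ of $\whaw$ with $\whaw \vdash \forall x A_D(x, t(x))$. Since $A$ is in $\Gamma_2$, the restricted form of Lemma~\ref{722d} gives $\whaw \vdash A^D \to A$; unwinding the outermost $\exists\forall$ block of $A^D$ against the extracted witness yields $\whaw \vdash \forall x^\rho A(x, t(x))$, and because $\rcaw_0$ extends $\whaw$ we conclude $\rcaw_0 \vdash \forall x^\rho A(x, t(x))$.

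Next I would promote this pointwise statement to its sequential form exactly as in the proof of Theorem~\ref{719F}. Working in $\rcaw_0$, fix an arbitrary sequence $\seqx$, regarded as a single function of type $0 \to \rho$. By $\lambda$ abstraction form $\lambda n.\,t(x_n)$, a function of type $0 \to \tau$, and take $\seqy$ to be this sequence; applying the already-proved $\forall x^\rho A(x, t(x))$ at each $x_n$ gives $\forall n\, A(x_n, y_n)$, hence $\rcaw_0 \vdash \forall \seqx\, \exists \seqy\, \forall n\, A(x_n, y_n)$. For the final claim, when $x$ and $y$ are type $1$ and the sentence lies in $\lang(\RCAo)$, the sequential form also lies in $\lang(\RCAo)$, so Theorem~\ref{consrcao} lets us replace $\rcaw_0$ by $\RCAo$.

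The proof presents no genuine obstacle: every step is a citation of an earlier result, and the only point requiring a word of care is verifying that the restricted-induction versions of Theorem~\ref{722b} and Lemma~\ref{722d} really are available. Kohlenbach's remark that the soundness theorem descends to $\whaw$ handles the former, and the second sentence of Lemma~\ref{722d} handles the latter, so the mild subtlety — that $\lambda$ abstraction and the requisite $\Sigma^0_1$ induction used to form and reason about the sequence are already present in $\rcaw_0$ — is dispatched by the observation, recorded earlier in section~\ref{sec2}, that $\RCAo$ proves $\Sigma^0_1$ induction. For these reasons it suffices to record that the proof is parallel to that of Theorem~\ref{722f}, substituting the restricted analogs of Theorem~\ref{722b}, Lemma~\ref{722d}, and Lemma~\ref{722e}, and invoking Theorem~\ref{consrcao} in place of the conservation result for $\rca$.
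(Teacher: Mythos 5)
Your proposal matches the paper's own argument essentially verbatim: the paper likewise cites Kohlenbach's remark that Theorem~\ref{722b} descends to $\whaw$, invokes the $\whaw$ case of Lemma~\ref{722d} to get a restricted form of Lemma~\ref{722e}, repeats the $\lambda$-abstraction step from the proof of Theorem~\ref{719F}, and applies Theorem~\ref{consrcao} for the final replacement of $\rcaw_0$ by $\RCAo$. No gaps; your added remark about $\Sigma^0_1$ induction being available in $\rcaw_0$ is a harmless extra precaution.
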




Uniformization results obtained by the {\it Dialectica} interpretation
are less broadly applicable than those obtained by modified
realizability, due to the fact that $\Gamma_2$ is a proper subset of
$\Gamma_1$.  In practice, however, the restriction to $\Gamma_2$ may
not be such a serious impediment.  Examination of the statements in
Theorem \ref{thm1} shows that the hypotheses in their implications are
purely universal, and consequently each of the statements is in
$\Gamma_2$.  Thus an application of Theorem~\ref{722f} shows that
Theorem~\ref{thm1} holds with $\hawfi + \ac + \ipwef$ replaced by
$\whawfi + \ac + \ipwa + \markov$.

While $\Gamma_2$ may not be the largest class of formulas for which an
analog of Theorem~\ref{restrdialectica} can be obtained, any class
substituted for $\Gamma_2$ must omit a substantial collection of
formulas.  For example, imitating the proof of
Kohlenbach~\cite{Koh-goodman}, working in $\whaw + \ac$ one can deduce
the $\Pi^0_n$ collection schemes, also known as ${\sf B} \Pi^0_n$.
These schemes contain formulas that are not provable in $\RCAo$, 
and any class of
formulas for which Theorem \ref{restrdialectica} holds must omit such 
formulas. The same observation holds for Theorem \ref{719J}.


\bibliographystyle{asl}
\begin{bibsection}
\begin{biblist}

\bib{AF-HPT}{article}{
   author={Avigad, Jeremy},
   author={Feferman, Solomon},
   title={G\"odel's functional \textup{(}\!``Dialectica''\textup{)} interpretation},
   conference={
      title={Handbook of proof theory},
   },
   book={
      series={Stud. Logic Found. Math.},
      volume={137},
      publisher={North-Holland},
      place={Amsterdam},
   },
   date={1998},
   pages={337--405},
  review={\MR{1640329 (2000b:03204)}},
}

\bib{Feferman-1977}{article}{
   author={Feferman, S. },
   title={Theories of finite type related to mathematical practice},
   conference={
      title={Handbook of mathematical logic},
   },
   book={
      publisher={North-Holland},
      place={Amsterdam},
   },
   date={1977},
   pages={913--971},
}

\bib{Halmos-FDVS}{book}{
   author={Halmos, Paul R.},
   title={Finite-dimensional vector spaces},
   series={The University Series in Undergraduate Mathematics},
   note={2nd ed},
   publisher={D. Van Nostrand Co., Inc., Princeton-Toronto-New York-London},
   date={1958},
   pages={viii+200},
   review={\MR{0089819 (19,725b)}},
}

\bib{Hirst-RRRM}{article}{
   author={Hirst, Jeffry L.},
   title={Representations of reals in reverse mathematics},
   journal={Bull. Pol. Acad. Sci. Math.},
   volume={55},
   date={2007},
   number={4},
   pages={303--316},
   issn={0239-7269},
  review={\MR{2369116}},
}

\bib{Kleene}{article}{
   author={Kleene, S. C.},
   title={Countable functionals},
   conference={
      title={Constructivity in mathematics: Proceedings of the colloquium
      held at Amsterdam, 1957 (edited by A. Heyting)},
   },
   book={
      series={Studies in Logic and the Foundations of Mathematics},
      publisher={North-Holland Publishing Co.},
      place={Amsterdam},
   },
   date={1959},
   pages={81--100},
   review={\MR{0112837 (22 \#3686)}},
}

\bib{Koh-goodman}{article}{
   author={Kohlenbach, Ulrich},
   title={A note on Goodman's theorem},
   journal={Studia Logica},
   volume={63},
   date={1999},
   number={1},
   pages={1--5},
   issn={0039-3215},
   review={\MR{1742380 (2000m:03150)}},
}

\bib{Koh-HORM}{article}{
   author={Kohlenbach, Ulrich},
   title={Higher order reverse mathematics},
   conference={
      title={Reverse mathematics 2001},
   },
   book={
      series={Lect. Notes Log.},
      volume={21},
      publisher={Assoc. Symbol. Logic},
      place={La Jolla, CA},
   },
   date={2005},
   pages={281--295},
  review={\MR{2185441 (2006f:03109)}},
}

\bib{Koh-book}{book}{
   author={Kohlenbach, Ulrich},
   title={Applied proof theory: proof interpretations and their use in
   mathematics},
   series={Springer Monographs in Mathematics},
   publisher={Springer-Verlag},
   place={Berlin},
   date={2008},
   pages={xx+532},
   isbn={978-3-540-77532-4}, 
   review={\MR{2445721 (2009k:03003)}},
}

\bib{KR}{article}{
   author={Kreisel, Georg},
   title={Interpretation of analysis by means of constructive functionals of
   finite types},
   conference={
      title={Constructivity in mathematics: Proceedings of the colloquium
      held at Amsterdam, 1957 (edited by A. Heyting)},
   },
   book={
      series={Studies in Logic and the Foundations of Mathematics},
      publisher={North-Holland Publishing Co.},
      place={Amsterdam},
   },
   date={1959},
   pages={101--128},
   review={\MR{0106838 (21 \#5568)}},
}

\bib{LM-FCP}{article}{
   author={Lempp, Steffen},
   author={Mummert, Carl},
   title={Filters on computable posets},
   journal={Notre Dame J. Formal Logic},
   volume={47},
   date={2006},
   number={4},
   pages={479--485},
   issn={0029-4527},
   review={\MR{2272083 (2007j:03084)}},
}

\bib{Mummert-Thesis}{thesis}{
	author={Mummert, Carl},
	organization = {The Pennsylvania State University},
	Title = {On the reverse mathematics of general topology},
	type={Ph.D. Thesis},
	Year = {2005}}

 \bib{Simpson-SOSOA}{book}{
   author={Simpson, Stephen G.},
   title={Subsystems of second order arithmetic},
   series={Perspectives in Mathematical Logic},
   publisher={Springer-Verlag},
   place={Berlin},
   date={1999},
   pages={xiv+445},
   isbn={3-540-64882-8},
   review={\MR{1723993 (2001i:03126)}},
}

\bib{troelstra73}{book}{
   title={Metamathematical investigation of intuitionistic arithmetic and
   analysis},
   series={Lecture Notes in Mathematics, Vol. 344},
   editor={Troelstra, A. S.},
   publisher={Springer-Verlag},
   place={Berlin},
   date={1973},
   pages={xvii+485},
   review={\MR{0325352 (48 \#3699)}},
}

\bib{troelstra-HP}{article}{
   author={Troelstra, A. S.},
   title={Realizability},
   conference={
      title={Handbook of proof theory},
   },
   book={
      series={Stud. Logic Found. Math.},
      volume={137},
      publisher={North-Holland},
      place={Amsterdam},
   },
   date={1998},
   pages={407--473},
   review={\MR{1640330 (99f:03084)}},
}

\end{biblist}
\end{bibsection}

\end{document}